\definecolor{light}{gray}{.96}
\newcommand\think[1]{}
\newcommand{\curl}[1]{\mathrm{Curl}(#1)}
\numberwithin{equation}{section}
\def\v{\mathrm{v}}
\newcommand\R{\mathbb{R}}
\newcommand\1{u_1}
\newcommand\2{u_2}
\newcommand\dom{\mathcal{O}}
\def\T{\mathbb{T}^2}
\newtheorem{theorem}{Theorem}[section]
\newtheorem{alg}[theorem]{Algorithm}
\newtheorem{corollary}[theorem]{Corollary}
\newtheorem{defn}[theorem]{Definition}
\newtheorem{example1}[theorem]{Example}
\newtheorem{exercise1}[theorem]{Exercise}
\newtheorem{lemma}[theorem]{Lemma}
\newtheorem{proposition}[theorem]{Proposition}
\newtheorem{remark1}[theorem]{Remark}
\newtheorem{digression1}[theorem]{Digression}
\newenvironment{definition}{
\begin{defn}
	\normalfont}{
\end{defn}
}
\newenvironment{remark}{
\begin{remark1}
	\normalfont}{
\end{remark1}
}
\newtheoremstyle{AppALem}{1}{1}
  {\itshape}{0pt}{\bfseries}{.}{ }
   {\thmname{Lemma }\thmnumber{A.{#2}}{\thmnote{}}}
   \theoremstyle{AppALem}\newtheorem{lemmaA}{Lemma}
\newtheoremstyle{AppBLem}{1}{1}
  {\itshape}{0pt}{\bfseries}{.}{ }
   {\thmname{Lemma }\thmnumber{B.{#2}}{\thmnote{}}}
   \theoremstyle{AppBLem}
\newtheoremstyle{AppBRem}{1}{1}
  {\itshape}{0pt}{\bfseries}{.}{ }
   {\thmname{Remark }\thmnumber{B.{#2}}{\thmnote{}}}
   \theoremstyle{AppBRem}\newtheorem{remarkB}{Remark}
\newtheoremstyle{AppBCor}{1}{1}
  {\itshape}{0pt}{\bfseries}{.}{ }
   {\thmname{Corollary }\thmnumber{B.{#2}}{\thmnote{}}}
   \theoremstyle{AppBCor}
\definecolor{darkred}{rgb}{0.9,0.1,0.1}
\definecolor{darkblue}{rgb}{0.1,0.1,0.9}
\begin{document}
\begin{frontmatter}
\title{2D Constrained Navier-Stokes Equations}
\author[ZB]{Zdzis\l aw Brze\'{z}niak}
\ead{zdzislaw.brzezniak@york.ac.uk}
\author[ZB]{Gaurav Dhariwal\corref{cor1}}
\ead{gd673@york.ac.uk}
\author[MM]{Mauro Mariani}
\ead{mariani@mat.uniroma1.it}
\address[ZB]{Department of Mathematics, University of York, Heslington, York,
YO10 5DD, UK}
\address[MM]{Dipartimento di Matematica
Universit\`a degli studi di Roma La Sapienza, Piazzale Aldo Moro, 5, Roma, 00185, Italy}
\tnotetext[t1]{The research of Gaurav Dhariwal is supported by Department of Mathematics, University of York. The research of Mauro Mariani has been 
 supported by  the A*MIDEX
project (n.\ ANR11IDEX000102) funded by the \emph{Investissements
d'Avenir} French Government program, managed by the French National
Research Agency (ANR). The research of Zdzislaw Brze{\'z}niak has been partially supported by the Leverhulme project grant ref no RPG-2012-514.
}

\cortext[cor1]{Corresponding author.}

\begin{abstract}
We study 2D Navier-Stokes equations with a constraint on $L^2$ energy of the solution. We prove the existence and uniqueness of a global solution for the constrained Navier-Stokes equation on $\R^2$ and $\T$, by a fixed point argument. We also show that the solution of constrained Navier-Stokes converges to the solution  of Euler equation as viscosity $\nu$ vanishes.
\end{abstract}

\begin{keyword} Navier-Stokes Equations, constrained energy, periodic boundary conditions, -gradient flow, global solution, convergence, Euler-Equations \\
AMS 2000 Mathematics Subject Classification:   35Q30 \sep 37L40 (primary), and 76M35  (secondary)
\end{keyword}

\end{frontmatter}

\section{Introduction}
\label{s:1}

The motivation for this paper is twofold. Firstly Caglioti \textit{et.al.} in \cite{CPR09} studied the well-posedness and asymptotic behaviour of two dimensional Navier-Stokes equations in the vorticity form with two constraints: constant energy $E(\omega)$ and moment of inertia $I(\omega)$
\[
\dfrac{\partial \omega}{\partial t} + u \cdot \nabla \omega = \nu \Delta \omega - \nu\, \rm{div}\left[ \omega \nabla \left(b \psi +  a \dfrac{|x|^2}{2} \right) \right],
\]
which can be rewritten as
\begin{equation}
\label{eq:cpr}
\dfrac{\partial \omega}{\partial t} + u \cdot \nabla \omega = \nu\, \rm{div}\left[ \omega \nabla \left(\log{\omega} -b \psi - a \dfrac{|x|^2}{2} \right) \right],
\end{equation}
where $\omega = \curl{u}$, $a = a(\omega)$ and $b = b(\omega)$ are the Lagrange multipliers associated to those constraints and
\[E(\omega) = \int_{\R^2} \psi \omega\,dx, \quad I(\omega) = \int_{\R^2} |x|^2 \omega \,dx, \quad \psi = - \Delta^{-1} \omega. \] They were able to show the existence of a unique classical global-in-time solution to \eqref{eq:cpr} for a family of initial data \cite[Theorem~5]{CPR09}. They were also able to prove that the solution to \eqref{eq:cpr} converges, as time tends to $+ \infty$, to the unique solution of an associated microcanonical variational problem \cite[Theorem~8]{CPR09}.

Secondly, Rybka \cite{[Rybka06]} and Caffarelli \& Lin \cite{[CL09]} study the linear heat equation with constraints. Rybka studied heat flow on a manifold $\mathcal{M}$ given by
\[\mathcal{M} = \left\{u \in L^2(\Omega) \cap C(\Omega) : \int_{\Omega} u^k(x)\,dx = C_k,\, k = 1, \dots, N \right\},\]
where $\Omega$ denotes a connected bounded region in $\R^2$ with smooth boundary. He proved \cite[Theorem~2.5]{[Rybka06]} the existence of the unique global solution for the projected heat equation
\begin{equation}
\label{eq:rybka}
\begin{cases}
\frac{du}{dt} = \Delta u - \sum_{k = 1}^N \lambda_k u^{k-1} \quad \mbox{in}\,\, \Omega \subset \R^2,\\
\frac{\partial u}{\partial n} = 0 \,\, \mbox{on}\,\, \partial \Omega, \quad \quad u(0,x) = u_0,
\end{cases}
\end{equation}
where $\lambda_k = \lambda_k(u)$ are such that $u_t$ is orthogonal to $\mathrm{Span}\left\{u^{k-1}\right\}$. He also showed that the solutions to \eqref{eq:rybka} converges to a steady state as time tends to $+ \infty$.\\
On the other hand Caffarelli and Lin initially establish the existence and uniqueness of a global, energy-conserving solution to the heat equation \cite[Theorem~1.1]{[CL09]}. They were then able to extend these results to more general family of singularly perturbed systems of nonlocal parabolic equations \cite[Theorem~3.1]{[CL09]}. Their main result was to prove the strong convergence of the solutions to these perturbed systems to some weak-solutions of the limiting constrained nonlocal heat flows of maps into a singular space.

In this paper we consider a problem which links the aforementioned works. We consider Navier-Stokes equations as in \cite{CPR09}, but subject to the same energy constraint as in \cite{[CL09],[Rybka06]}. Contrary to \cite{CPR09} we prove global-in-time existence of the solution but only on a torus, namely in the periodic case. Surprisingly our proof of global existence does not hold for a general bounded domain, although the local existence holds. We also prove our result of global existence of the solution for $\R^2$.
We additionally show that, in vanishing viscosity limit, the solution of the constrained equation \eqref{eq:1.1} below, converges to the Bardos solution (see \cite{[Bardos]}) of the Euler equation (formally obtained setting $\nu = 0$).

We are interested in the Cauchy problem
\begin{align}
\label{eq:1.1}
\begin{cases}
\dfrac{du}{dt} = - \nu Au + \nu|\nabla u|^2\,u - B(u,u),\\
u(0) = u_0,
\end{cases}
\end{align}
where $u \in \rm{H}$, and $\rm{H}$ is a space of divergence free, mean zero vector fields on a torus, see \eqref{eq:functional on torus} below for a precise definition.

The above problem has a local \emph{maximal} solution for each $u_0 \in \rm{V} \cap \mathcal{M}$, where $\rm{V}$ is defined in \eqref{eq:functional on torus} and
\[\mathcal{M} = \{ u \in \rm{H} : |u| = 1\}.\]
Moreover $u(t) \in \mathcal{M}$ for all times $t$. This result is true both for NSEs on a bounded domain or with periodic boundary conditions (i.e. on a torus). In a more geometrical fashion, equation (\ref{eq:1.1}) can be also written as
\[\dfrac{du}{dt} = - \nabla_{\mathcal{M}} \mathcal{E}(u) - B(u,u),\]
where $\mathcal{E}(u) = \frac12 |\nabla u|^2, u \in \mathcal{M}$ and $\nabla_{\mathcal{M}}\mathcal{E}(u)$ is the gradient of $\mathcal{E}$ with respect to $\rm{H}$-norm projected onto $T_u\mathcal{M}$. The remarkable feature of this is that on a torus $\nabla_{\mathcal{M}}\mathcal{E}(u)$ and $B(u,u)$ are orthogonal in $\rm{H}$. This orthogonality holds for the Navier-Stokes without constraint too, i.e. on a torus $\nabla \mathcal{E}(u)$ is orthogonal to $B(u,u)$ in $\rm{H}$. The fact that this constraint preserves the orthogonality somehow makes it a natural constraint.

Hence in at least heuristic way
\begin{align*}
\frac{d}{dt}\mathcal{E}(u(t)) & = \left\langle \nabla_{\mathcal{M}}\mathcal{E}(u(t)), \frac{du}{dt} \right\rangle_{\rm{H}} \\
& = \langle \nabla_{\mathcal{M}}\mathcal{E}(u(t)), - \nabla_{\mathcal{M}}\mathcal{E}(u(t)) - B(u,u) \rangle_{\rm{H}}\\
& = - |\nabla_{\mathcal{M}}\mathcal{E}(u(t))|^2,
\end{align*}
so that $\mathcal{E}(u(t))$ is decreasing and thus the $H^{1,2}$ norm of the solution remains bounded.

Next we state the two main results of the paper on a torus.

\begin{theorem}
\label{thm1.1}
Let $u_0 \in \rm{V} \cap \mathcal{M}$ and $X_T = C([0,T]; \rm{V}) \cap L^2(0,T; \rm{E})$. Then for every $\nu > 0$ there exists a global and locally unique solution $u \in X_T$ of \eqref{eq:1.1}.
\end{theorem}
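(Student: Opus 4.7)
The plan is to obtain a local-in-time mild solution by a Banach fixed-point argument in $X_T$, to verify rigorously on the constructed solution the two identities stated in the introduction (preservation of the $\mathrm{H}$-norm and monotonicity of $\mathcal{E}(u)$), and then to use the resulting uniform $\mathrm{V}$-bound to continue the solution for all times.

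For the local step I would recast \rf{eq:1.1} in mild form,
\begin{equation*}
u(t) = e^{-\nu t A}u_0 + \int_0^t e^{-\nu(t-s)A}\bigl[\nu|\nabla u(s)|^2 u(s) - B(u(s),u(s))\bigr]\,ds,
\end{equation*}
and seek a fixed point of the associated map $\Phi$ in a closed ball of $X_T = C([0,T];\mathrm{V}) \cap L^2(0,T;\mathrm{E})$. The Stokes semigroup $(e^{-\nu tA})$ is analytic and enjoys the maximal $L^2$-regularity estimate that makes this space natural. The bilinear term $B(u,u)$ is controlled by the usual $2$D Ladyzhenskaya-type estimates, and for the cubic Lagrange-multiplier term, Gagliardo-Nirenberg interpolation gives
\begin{equation*}
\bigl\||\nabla u|^2 u\bigr\|_{\mathrm{H}} \leq C \|\nabla u\|_{L^4}^2 \|u\|_{L^\infty} \leq C' \|u\|_{\mathrm{V}}^{3/2}\|u\|_{\mathrm{E}}^{3/2},
\end{equation*}
which fits into the same framework and produces a contraction on $X_T$ for $T$ small, depending only on $\|u_0\|_{\mathrm{V}}$.

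With a local solution in hand, the constraint is verified by taking the $\mathrm{H}$-inner product of \rf{eq:1.1} with $u$: using $\langle B(u,u),u\rangle_{\mathrm{H}} = 0$ and $\langle Au, u\rangle_{\mathrm{H}} = |\nabla u|^2$ gives
\begin{equation*}
\tfrac{d}{dt}|u(t)|^2 = 2\nu|\nabla u(t)|^2\bigl(|u(t)|^2-1\bigr),
\end{equation*}
so $|u(t)|^2 - 1$ obeys a linear ODE and stays at $0$ since $|u_0|=1$. Pairing \rf{eq:1.1} with $Au$ and exploiting the torus-specific orthogonality $\langle B(u,u), Au\rangle_{\mathrm{H}} = 0$ (the same identity that makes the enstrophy dissipate for the unconstrained 2D NSE on $\mathbb{T}^2$), together with $|u|=1 \Rightarrow |\nabla u|^4 = \langle Au,u\rangle^2 \leq |Au|^2$, yields
\begin{equation*}
\tfrac{1}{2}\tfrac{d}{dt}|\nabla u|^2 = -\nu|Au|^2 + \nu|\nabla u|^4 \leq 0.
\end{equation*}
Hence $\|u(t)\|_{\mathrm{V}} \leq \|u_0\|_{\mathrm{V}}$ for all $t$, and integrating in time also produces $u \in L^2_{\mathrm{loc}}(0,\infty;\mathrm{E})$.

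Since the local existence time from the first step depends only on $\|u_0\|_{\mathrm{V}}$, and $\|u(t)\|_{\mathrm{V}}$ does not grow, a standard continuation argument rules out finite-time blow-up and delivers a solution $u \in X_T$ for every $T>0$. Local uniqueness is an immediate by-product of the contraction estimate for $\Phi$; alternatively it follows from a Gronwall estimate on the difference $w = u^{(1)} - u^{(2)}$ of two solutions sharing the same initial datum. The principal technical obstacle, and the reason the authors emphasise the role of the torus, is the cubic non-Lipschitz term $\nu|\nabla u|^2 u$: it is what pins the solution to $\mathcal{M}$, yet on a generic bounded domain the identity $\langle B(u,u), Au\rangle=0$ fails, the $+\nu|\nabla u|^4$ in the energy balance loses its absorbing partner, and the global step breaks down even though the local step goes through unchanged.
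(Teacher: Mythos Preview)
Your overall strategy matches the paper's exactly: local existence by a fixed-point argument in $X_T$, verification that the solution stays on $\mathcal{M}$ via the ODE for $|u|_{\mathrm H}^2-1$, the a~priori $\mathrm V$-bound obtained by pairing with $Au$ and using the torus identity $\langle B(u,u),Au\rangle_{\mathrm H}=0$, and a continuation/gluing argument. The paper organises the local step slightly differently, truncating the nonlinearity by a cutoff $\theta_n(|u|_{X_t})$ so as to obtain a globally Lipschitz map and then showing the fixed point never reaches the truncation level; your direct contraction on a ball is an equally standard variant.

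One point to correct: you are misreading the cubic term. In \rf{eq:1.1} the coefficient $|\nabla u|^2$ is the \emph{scalar} $\langle Au,u\rangle_{\mathrm H}=\int_{\mathbb T^2}|\nabla u(x)|^2\,dx$, not the pointwise function $x\mapsto|\nabla u(x)|^2$; this is exactly how the Lagrange multiplier arises from projecting $Au$ onto $T_u\mathcal M$. Consequently
\[
\bigl\|\,|\nabla u|^2\,u\,\bigr\|_{\mathrm H}=|\nabla u|^2\,|u|_{\mathrm H}\le C\,\|u\|_{\mathrm V}^3,
\]
with no Gagliardo--Nirenberg needed; this is precisely the estimate the paper uses (Lemma~3.2). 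Your proposed bound $C'\|u\|_{\mathrm V}^{3/2}\|u\|_{\mathrm E}^{3/2}$ would in fact be awkward to close in $L^2(0,T;\mathrm H)$ within $X_T$, whereas the correct reading makes this term the easiest of the two nonlinearities.
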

\medskip
The space $X_T$ with more details and the precise definition of the solution of \eqref{eq:1.1} will be given in the Section~\ref{s:3}. Theorem~\ref{thm1.1} will be proved in steps in Sections~\ref{s:3} and \ref{s:4}.

\begin{theorem}
\label{thm1.2}
Let $u_0, u_0^\nu \in \rm{V} \cap \mathcal{M}$ and $u^\nu$ be the solution of \eqref{eq:1.1} (existence and uniqueness of $u^\nu$ follows from Theorem~\ref{thm1.1}). Assume that $u_0^\nu \to u_0$ in $\rm{V}$ as $\nu \downarrow 0$, and that $\mathrm{Curl}(u_0^\nu)$ stays uniformly bounded in $L^\infty(\T)$. Then for each $T>0$, $u^\nu$ converges in $C([0,T];L^2(\T))$ to the unique solution $u$ of the limiting equation (namely \eqref{eq:1.1} with $\nu=0$).
\end{theorem}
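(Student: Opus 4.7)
My plan is to combine uniform-in-$\nu$ regularity estimates for $u^\nu$ with a compactness argument, identify the subsequential limit as the unique Bardos solution of the limiting Euler system, and finally upgrade weak subsequential convergence to strong convergence in $C([0,T]; L^2(\T))$ by exploiting that the $L^2$-norm of the solution is pinned to $1$ throughout the flow (for $u^\nu$ because the constraint $\mathcal{M}$ is preserved, and for the limit because 2D Euler conserves the $L^2$-energy on the torus).

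First I would establish uniform estimates. The dissipation identity sketched in Section~\ref{s:1} gives $\|u^\nu(t)\|_V \le \|u_0^\nu\|_V$, which is uniformly bounded by hypothesis. Applying $\mathrm{Curl}$ to \eqref{eq:1.1} yields a transport-diffusion equation for $\omega^\nu = \curl{u^\nu}$,
\begin{equation*}
\partial_t \omega^\nu + (u^\nu \cdot \nabla)\, \omega^\nu = \nu \Delta \omega^\nu + \nu\, \curl{|\nabla u^\nu|^2\, u^\nu},
\end{equation*}
whose additional source carries a prefactor $\nu$. An $L^p$-energy estimate, letting $p\to\infty$, combined with the uniform $V$-bound and integration by parts against the diffusive term, should yield a Gronwall-type bound $\|\omega^\nu(t)\|_{L^\infty} \le \|\omega_0^\nu\|_{L^\infty}\exp(C\nu t)$, uniform in $\nu$. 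By Biot--Savart on $\T$, this provides uniform $L^\infty$-bounds on $u^\nu$ as well, while \eqref{eq:1.1} gives $\partial_t u^\nu$ uniformly bounded in $L^2(0,T; V')$.

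With these estimates, Aubin--Lions (with $V \hookrightarrow H$ compact) extracts a subsequence $u^{\nu_k}$ converging strongly in $L^2(0,T; H)$ and weakly-$\ast$ in $L^\infty(0,T; V)$ to some $u$. The terms $\nu A u^\nu$ and $\nu |\nabla u^\nu|^2 u^\nu$ vanish distributionally in the limit thanks to the $\nu$-prefactor, while $B(u^{\nu_k}, u^{\nu_k})\to B(u,u)$ follows from strong $L^2(0,T;H)$-convergence and the uniform $V$-bound. Hence $u$ satisfies Euler, $\partial_t u + B(u,u) = 0$, with $\curl{u} \in L^\infty$ and $\|u(t)\|_H = 1$. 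Yudovich's uniqueness in the class of bounded vorticities pins down the limit and forces convergence of the full sequence. To upgrade to $C([0,T]; L^2)$, I use $\|u^\nu(t)\|_H = 1 = \|u(t)\|_H$ together with weak-$H$ convergence of $u^\nu(t)$ to $u(t)$ for each $t$ (uniform in $t$ by an Arzela--Ascoli argument in the weak topology, using the $L^2(0,T; V')$-bound on $\partial_t u^\nu$), whence
\begin{equation*}
\|u^\nu(t) - u(t)\|_H^2 = 2 - 2\langle u^\nu(t), u(t)\rangle_H \longrightarrow 0 \quad \text{uniformly in } t.
\end{equation*}

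The hard part is the uniform $L^\infty$-bound on $\omega^\nu$ in Step 1. The source $\nu\, \curl{|\nabla u^\nu|^2 u^\nu}$ is non-standard: although it carries a $\nu$-prefactor, it involves second spatial derivatives of $u^\nu$ and must be controlled using only the uniform $V$-bound together with the crucial hypothesis that $\curl{u_0^\nu}$ is uniformly $L^\infty$-bounded. Here one expects to exploit the Biot--Savart representation and the identity $\|\nabla u^\nu\|_{L^2} = \|\omega^\nu\|_{L^2}$ on the torus to absorb a portion of the source into the diffusion $\nu \Delta \omega^\nu$. A fallback would be to work with $L^p$-vorticity for large but finite $p$ and invoke DiPerna--Majda-type compactness, though this yields a weaker identification of the limit.
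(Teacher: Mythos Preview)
Your main difficulty is self-inflicted: you have misread the constraint term. In the paper's notation $|\cdot|$ is the $L^2$-norm, so $|\nabla u|^2=\int_{\T}|\nabla u(x)|^2\,dx=\|u\|_{\rm V}^2$ is a \emph{scalar} depending only on $t$, not a pointwise quantity. Consequently
\[
\curl{\,|\nabla u^\nu|^2\,u^\nu\,}=\|u^\nu\|_{\rm V}^2\,\curl{u^\nu}=\|u^\nu\|_{\rm V}^2\,\omega^\nu,
\]
and the vorticity equation is exactly \eqref{eq:5.2}:
\[
\partial_t\omega^\nu+\nabla\cdot(u^\nu\omega^\nu)=\nu\Delta\omega^\nu+\nu\,\|u^\nu\|_{\rm V}^2\,\omega^\nu.
\]
The extra source is a zeroth-order multiplicative term with coefficient $\nu\|u^\nu(t)\|_{\rm V}^2\le\nu\|u_0^\nu\|_{\rm V}^2$, uniformly bounded by Lemma~\ref{lemma4.2}. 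No second derivatives of $u^\nu$ appear, and your ``hard part'' evaporates: testing with convex $h(\omega^\nu)$ (or simply $|\omega^\nu|^{p-2}\omega^\nu$) and Gronwall give $|\omega^\nu(t)|_{L^\infty}\le|\omega_0^\nu|_{L^\infty}\exp(\nu\|u_0^\nu\|_{\rm V}^2\,t)$, which is precisely Proposition~\ref{prop5.2}. The Biot--Savart manoeuvre and the DiPerna--Majda fallback are unnecessary.

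Once this is fixed, your overall strategy is sound and close to the paper's, with two genuine differences in packaging. For compactness, the paper proves equicontinuity of $t\mapsto u^\nu(t)$ in $L^2(\T)$ directly from the equation (Proposition~\ref{prop5.4}) and applies Ascoli--Arzel\`a to land in $C([0,T];L^2)$ in one step; you instead go through Aubin--Lions for $L^2(0,T;{\rm H})$ and then upgrade. Your upgrade argument, using $|u^\nu(t)|_{\rm H}=1=|u(t)|_{\rm H}$ together with uniform-in-$t$ weak convergence in ${\rm H}$, is a legitimate and rather clean alternative that the paper does not use; it trades an explicit equicontinuity estimate for the constraint-preservation (Theorem~\ref{thm3.13}) plus energy conservation for Euler. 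Both routes invoke uniqueness of the bounded-vorticity Euler solution (Bardos/Yudovich) to identify the limit and conclude convergence of the full family.
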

\medskip

We end the introduction with a brief description of the content of the paper. In Section~\ref{s:2}, we introduce a constrained Navier-Stokes equation. In Section~\ref{s:3}, a precise definition of the solution is given, and local existence and uniqueness are proved, together with some basic properties of the solution. In Section~\ref{s:4}, global existence is established. Finally, in Section~\ref{s:5} we prove Theorem~\ref{thm1.2}.

\section{Constrained Navier-Stokes equations}
\label{s:2}

\subsection{General Notations}
\label{s:2.1}
Let $\mathcal{O}$ be either a bounded domain in $\R^2$, $\R^2$ or $\T$.  For $p \in [1, \infty]$ and $k \in \mathbb{N}$, the Lebesgue and Sobolev spaces of $\R^2$-valued functions will be denoted by $L^p(\mathcal{O}, \R^2)$ and $W^{k,p}(\mathcal{O}, \R^2)$ respectively, and often $L^p$ and $W^{k,p}$ whenever the context is understood. The usual scalar product on $L^2$ is denoted by $\langle u,\v \rangle$ for $u, \v \in L^2$. The associated norm is given by $|u|, u \in L^2$. We also write $W^{k,2}(\mathcal{O}, \R^2) := H^k$ and will denote it's norm by $\|\cdot \|_{H^k}$. In particular the scalar product for $H^1$ is given by
\[\langle u , \v \rangle_{H^1} = \langle u , \v \rangle + \langle \nabla u, \nabla \v \rangle,\;\; u, \v \in H^1,\]
and thus the norm is
\[\|u\|_{H^1} = \left[ |u|^2 + |\nabla u|^2\right]^{1/2}. \]
In the following two subsections we will introduce some additional spaces. The structure of the spaces will depend on the choice of $\mathcal{O}$.

\subsection{Functional setting for $\R^2$}
\label{s:2.2}
We consider the whole space $\R^2$.  We introduce the following spaces:

\begin{equation}
\label{eq:functional}
\begin{split}
\rm{H} & = \{u \in L^2(\R^2, \R^2) : \nabla \cdot u = 0\},\\
\rm{V} & = H^1 \cap \rm{H}.
\end{split}
\end{equation}

We endow ${\rm{H}}$ with the scalar product and norm of $L^2$ and denote it by $\langle u , \v \rangle_{{\rm{H}}}$, $|u|_{{\rm{H}}}$ respectively for $u,\v \in {\rm{H}}$. We equip the space $\rm{V}$ with the scalar product and norm of $H^1$ and will denote it by $\langle \cdot , \cdot \rangle_V$ and $\|\cdot\|_{\rm{V}}$ respectively.

Let $\Pi : L^2 \rightarrow \rm{H}$ be Leray-Helmholtz projection operator which projects the vector fields on the plane of divergence free vector fields. We denote by $A: \rm{D}(A) \rightarrow {\rm{H}}$, the Stokes operator which is defined by
\begin{align*}
 \rm{D}(A) & = \rm{H} \cap H^2(\R^2),\\
 Au & = - \Pi\, \Delta u, ~~~u \in \rm{D}(A).
 \end{align*}
It is well known that $A$ is a self adjoint non-negative operator in ${\rm{H}}$. Note that $\Delta$ and $\Pi$ commute with each other. Moreover
\[ \rm{D}( (A+I)^{1/2}) = \rm{V} \quad \mbox{and} \quad \langle Au, u \rangle = |\nabla u|^2, \,\,\, u \in \rm{D}(A).\]
 From now onwards we will denote ${\rm{E}} := \rm{D}(A)$.

\subsection{Functional setting for a periodic domain}
\label{s:2.3}

We denote the bounded periodic domain by $\T$ which can be identified to a two dimensional torus. We introduce the following spaces:
\begin{equation}
\label{eq:functional on torus}
\begin{split}
\mathbb{L}^2_0 & = \{u \in L^2(\T, \R^2) : \int_{\T} u(x)\,dx = 0 \},\\
{\rm{H}} & = \{ u \in \mathbb{L}^2_0 : \nabla \cdot u = 0\},\\
\rm{V} &= H^1 \cap {\rm{H}}.
\end{split}
\end{equation}
We endow ${\rm{H}}$ with the scalar product and norm of $L^2$ and denote it by $\langle u , \v \rangle_{{\rm{H}}}$, $|u|_{{\rm{H}}}$ respectively for $u,\v \in {\rm{H}}$. We equip the space $\rm{V}$ with the scalar product $\langle \nabla u, \nabla \v \rangle_{{\rm{H}}}$ and norm $\|u\|_{\rm{V}}, u, \v \in \rm{V}$.\\

One can show that in the case of $\T$ $\rm{V}$-norm $\|\cdot \|_{\rm{V}}$, and $H^1$-norm $\|\cdot\|_{H^1}$ are equivalent on $\rm{V}$.\\

As before we denote by $A: \rm{D}(A) \rightarrow {\rm{H}}$, the Stokes operator which is defined by
\begin{align*}
\rm{D}(A) &= \rm{H} \cap H^2(\T),\\
 Au &= - \Delta u, ~~~u \in \rm{D}(A).
 \end{align*}
It is well known that $A$ is a self adjoint positive operator in ${\rm{H}}$. Moreover
\[\rm{D}(A^{1/2}) = \rm{V} \quad \mbox{and} \quad \langle Au , u \rangle = \|u\|^2_{\rm{V}} = |\nabla u|^2, \,\,\, u \in \rm{D}(A).\]

In the following subsection we will introduce a tri-linear form which is well defined for any general domain $\mathcal{O}$ and will state some of it's properties.

\subsection{Preliminaries}
\label{s:2.4}

From now onwards we denote our domain by $\mathcal{O}$ which can be either $\R^2$ or $\T$. We introduce a continuous tri-linear form $b : L^p \times W^{1,q} \times L^r \rightarrow \R$,
\[b(u,\v,w) = \sum_{i, j=1}^2 \int_{\dom} u^i \frac{\partial \v^j}{\partial x^i} w^j~dx,\]
where $p, q, r \in [1, \infty]$ satisfies
\[ \frac1p +\frac1q + \frac1r \leq 1.\]

We can define a bilinear map $B : {\rm{V}} \times {\rm{V}} \rightarrow {\rm{V}}^{\prime}$ such that
\[\langle B(u,\v), \phi \rangle = b(u,\v, \phi),~~~~ \text{for}~u, \v, \phi \in {\rm{V}},\]
where $\langle \cdot, \cdot \rangle$ denotes the duality between $V$ and $V^\prime$. If $u \in \rm{V}, \v \in \rm{E}$ and $\phi \in \rm{H}$ then

\[|b(u,\v,\phi)| \leq \sqrt{2}\, |u|^{\frac12}_{\rm{H}}\, \|u\|^{\frac12}_{\rm{V}}\,\|\v\|^{\frac12}_{\rm{V}}\, |\v|^{\frac12}_{\rm{E}}\,|\phi|_{\rm{H}}.\]

Thus $b$ can be uniquely extended to the tri-linear form (denoted by the same letter)
\[b: \rm{V} \times \rm{E} \times \rm{H} \to \R.\]
We can now also extend the operator $B$ uniquely to a bounded linear operator
\[B : \rm{V} \times \rm{E} \to \rm{H}.\]

The following properties of the tri-linear map $b$ and the bilinear map $B$ are very well established in \cite{[Temam79]} and \ref{a:1},
\begin{align*}
& b(u,u,u) = 0,~~~\quad \,\, \quad u \in \rm{V},\\
& b(u,w,w) = 0,~~~\quad  \quad u \in \rm{V}, w \in H^1,\\
& \langle B(u,u), Au\rangle_{\rm{H}} = 0, \quad u \in \rm{D}(A).
\end{align*}

The 2D Navier-Stokes equations are given as following:
\begin{align}
\label{eq:2.1}
\begin{cases}
\dfrac{\partial u(x,t)}{\partial t} - \nu \Delta u(x,t) + (u(x,t) \cdot \nabla ) u(x,t) + \nabla p(x,t)  = 0,\\
\nabla \cdot u(x,t) = 0,\\
u(x,0) = u_0(x),\\
\end{cases}
\end{align}
where $x \in \dom$ and $t \in [0,T]$ for every $T > 0$; $u \colon \dom \to \mathbb{R}^2$ and $p\colon \dom  \to \mathbb{R}$ are velocity and pressure of the fluid respectively. $\nu$ is the viscosity of the fluid (with no loss of generality, $\nu$ will be taken  equal to $1$ for the rest of the article, except in the Section~\ref{s:5}).\\

With all the notations as defined in the subsections \ref{s:2.1} and \ref{s:2.2}, the Navier-Stokes equation \eqref{eq:2.1} projected on divergence free vector field is given by
\begin{align}
\label{eq:2.2}
\begin{cases}
\dfrac{du}{dt} + Au + B(u,u) = 0,\\
u(0) = u_0.
\end{cases}
\end{align}

Let us denote the set of divergence free $\mathbb{R}^2$-valued functions with unit $L^2$ norm, as following
\[\mathcal{M} = \{ u \in {\rm{H}} : |u| = 1\}.\]
Then the tangent space at $u$ is defined as,
\[T_u \mathcal{M} = \{\v \in {{\rm{H}}} : \langle \v, u \rangle_{\rm{H}} = 0 \},~~~~u \in \mathcal{M}.\]

 We define a linear map $\pi_u : {{\rm{H}}} \rightarrow T_u \mathcal{M}$ by
\[ \pi_u(\v) = \v - \langle \v, u \rangle_{\rm{H}}\, u, \]
then $\pi_u$ is the orthogonal projection from ${{\rm{H}}}$ into $T_u \mathcal{M}$.\\

 Let $F(u) = Au + B(u,u)$ and $\hat{F}(u)$ be the projection of $F(u)$ on the tangent space $T_u\mathcal{M}$, then
\begin{align*}
\hat{F}(u) &= \pi_u(F(u))\\
&= F(u) - \langle F(u), u \rangle_{\rm{H}} \, u \\
&= Au + B(u,u) - \langle Au + B(u,u), u \rangle_{\rm{H}} \, u \\
&= Au - \langle Au, u \rangle_{\rm{H}} \, u + B(u,u) - \langle B(u,u), u \rangle_{\rm{H}} \, u \\
&= Au - |\nabla u|^2\,u + B(u,u).
\end{align*}
The last equality follows from the identity that $\langle B(u,u), u\rangle_{\rm{H}} = 0$.\\

\begin{remark}
\label{rem2.1}
Since $\langle B(u,u) , u\rangle_{\rm{H}} = 0$ and $ u \in \mathcal{M}$, $B(u,u) \in T_u \mathcal{M}$.
\end{remark}
\medskip
Thus by projecting NSEs \eqref{eq:2.2} on the manifold $\mathcal{M}$, we obtain our constrained Navier-Stokes equation which is given by
\begin{align}
\label{eq:2.3}
\begin{cases}
\dfrac{du}{dt} + Au - |\nabla u|^2\,u + B(u,u) = 0,\\
u(0) = u_0 \in \rm{V} \cap \mathcal{M}.
\end{cases}
\end{align}

\section{Local solution : Existence and Uniqueness}
\label{s:3}

In this section we will establish the existence of a local solution of the problem \eqref{eq:2.3} by using fixed point method. We obtain certain estimates for non-linear terms of \eqref{eq:2.3} using preliminaries from the previous section. After obtaining these estimates we construct a globally Lipschitz map. Some ideas in the Subsection \ref{s:3.1} are based on \cite{[BGT03]}. \\

We use the following well established \cite{[Temam79]} result to obtain the estimates.
\begin{lemma} \label{lemma3.1}
For any open set $\Omega \subset \mathbb{R}^2$  and every $\v \in H^1$, we have
\[|\v|_{\mathbb{L}^4(\Omega)} \leq 2^{1/4}|\v|_{\mathbb{L}^2(\Omega)}^{1/2}|\nabla \v|^{1/2}_{\mathbb{L}^2(\Omega)}, \quad \v \in H^1(\Omega). \]
\end{lemma}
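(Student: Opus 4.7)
The plan is to prove the classical 2D Ladyzhenskaya inequality (a particular case of the Gagliardo--Nirenberg interpolation inequality in dimension two) by the standard one-dimensional slicing argument combined with density.

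First I would reduce to smooth test functions. For $\Omega=\R^2$ this is immediate by density of $C_c^\infty(\R^2,\R^2)$ in $H^1(\R^2,\R^2)$; for $\Omega=\T$ density of trigonometric polynomials in $H^1(\T,\R^2)$ does the job; for a general open set one either restricts to $\v\in H^1_0(\Omega)$ and extends by zero to work on $\R^2$, or invokes an extension operator. In all cases, the $L^4$ estimate passes to $H^1$ limits by continuity/lower semicontinuity of the norms.

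Next I would prove the scalar version for $\v\in C_c^\infty(\R^2)$. By the fundamental theorem of calculus in the $x_1$-direction,
\[\v^2(x_1,x_2) = 2\int_{-\infty}^{x_1}\v\,\partial_1\v\,ds = -2\int_{x_1}^{\infty}\v\,\partial_1\v\,ds,\]
so averaging the two one-sided expressions gives
\[\sup_{x_1}\v^2(x_1,x_2) \leq \int_{\R}|\v\,\partial_1\v|(s,x_2)\,ds,\]
and an analogous bound in $x_2$. Using the pointwise estimate $\v^4(x_1,x_2)\leq (\sup_{x_1}\v^2)\,(\sup_{x_2}\v^2)$, integrating over $\R^2$, applying Cauchy--Schwarz once and finishing with the AM--GM bound $|\partial_1\v|_{L^2}|\partial_2\v|_{L^2}\leq \tfrac12|\nabla\v|_{L^2}^2$ yields the scalar Ladyzhenskaya inequality.

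The final step is the passage to vector fields $\v=(\v^1,\v^2)$: using $((\v^1)^2+(\v^2)^2)^2\leq 2((\v^1)^4+(\v^2)^4)$ together with the trivial bounds $|\v^i|_{L^2}\leq |\v|_{L^2}$ and $|\nabla\v^i|_{L^2}\leq |\nabla\v|_{L^2}$, a short aggregation of the componentwise scalar estimates gives the stated inequality (the constant $2^{1/4}$ being non-sharp but sufficient). This is a classical argument, which is why the paper simply cites \cite{[Temam79]}; the only minor obstacle is keeping track of the constants when moving from the scalar to the vector-valued setting, and making sure that for a general open $\Omega$ one has either compact support or an admissible extension to $\R^2$ so that the slicing step makes sense.
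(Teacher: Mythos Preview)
Your sketch is correct and is precisely the classical Ladyzhenskaya argument that the paper invokes by citing \cite{[Temam79]}; the paper itself gives no proof of this lemma. Your remark that for a general open $\Omega$ one must work in $H^1_0(\Omega)$ (extend by zero) rather than in all of $H^1(\Omega)$ is in fact necessary---the stated inequality fails for nonzero constants on bounded $\Omega$---and matches how the result is formulated in \cite{[Temam79]}.
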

\medskip

In what follows we assume that $\rm{E}, \rm{V}$ and $\rm{H}$ are spaces defined before in Section \ref{s:2}.

\begin{lemma} \label{lemma3.2}
Let $G_1 \colon \rm{V} \rightarrow {\rm{H}}$ be defined by
\[ G_1(u)  = |\nabla u|^2\, u, \quad u \in \rm{V}. \]
Then there exists $C > 0$ such that for $u_1, \2 \in \rm{V}$,
\begin{equation}
\label{eq:3.1}
|G_1(\1) - G_1(\2)|_{\rm{H}} \leq C \|\1 - \2 \|_{\rm{V}}\big[ \|\1\|_{\rm{V}} + \|\2\|_{\rm{V}} \big]^2.
\end{equation}
\end{lemma}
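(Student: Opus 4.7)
The plan is to use the telescoping decomposition
\[
G_1(u_1)-G_1(u_2) \;=\; |\nabla u_1|^2\,(u_1-u_2) \;+\; \bigl(|\nabla u_1|^2-|\nabla u_2|^2\bigr)\,u_2,
\]
and to estimate the $\rm{H}$-norm of each piece. Note that here $|\nabla u|^2$ is a real scalar (the squared $L^2$-norm of the gradient), so taking the $\rm{H}$-norm just factors that scalar out.

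For the first piece, I would simply write
\[
\bigl||\nabla u_1|^2(u_1-u_2)\bigr|_{\rm{H}} \;=\; |\nabla u_1|^2\,|u_1-u_2|_{\rm{H}} \;\le\; \|u_1\|_{\rm{V}}^{\,2}\,\|u_1-u_2\|_{\rm{V}},
\]
using the continuous embedding $\rm{V}\hookrightarrow\rm{H}$ (equivalently $|v|_{\rm{H}}\le\|v\|_{\rm{V}}$).

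For the second piece, I would factor the difference of squares as an inner product,
\[
|\nabla u_1|^2-|\nabla u_2|^2 \;=\; \langle \nabla u_1-\nabla u_2,\ \nabla u_1+\nabla u_2\rangle,
\]
and apply Cauchy--Schwarz to get $||\nabla u_1|^2-|\nabla u_2|^2|\le \|u_1-u_2\|_{\rm{V}}\,(\|u_1\|_{\rm{V}}+\|u_2\|_{\rm{V}})$. Pulling this scalar out and using $|u_2|_{\rm{H}}\le\|u_2\|_{\rm{V}}$ yields a bound
\[
\bigl\|(|\nabla u_1|^2-|\nabla u_2|^2)\,u_2\bigr\|_{\rm{H}} \;\le\; \|u_1-u_2\|_{\rm{V}}\,\|u_2\|_{\rm{V}}\bigl(\|u_1\|_{\rm{V}}+\|u_2\|_{\rm{V}}\bigr).
\]

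Adding the two estimates and bounding $\|u_1\|_{\rm{V}}^2 + \|u_2\|_{\rm{V}}(\|u_1\|_{\rm{V}}+\|u_2\|_{\rm{V}})$ above by $(\|u_1\|_{\rm{V}}+\|u_2\|_{\rm{V}})^2$ gives \eqref{eq:3.1} with $C=1$ (one can take any $C\ge 1$ for cleanliness). There is no genuine obstacle here; the only thing to be careful about is not to confuse the scalar $|\nabla u|^2\in\R$ with the vector field $\nabla u$ itself, so that the $\rm{H}$-norm computation is really just $|\mathrm{scalar}|\cdot|u|_{\rm{H}}$. The embedding $\rm{V}\hookrightarrow\rm{H}$ (trivial on $\T$ by the definition of $\|\cdot\|_{\rm{V}}$ together with Poincar\'e, and standard on $\R^2$) is the only background fact required beyond Cauchy--Schwarz.
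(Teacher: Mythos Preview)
Your proof is correct and follows essentially the same route as the paper: the same telescoping decomposition, the embedding $\rm{V}\hookrightarrow\rm{H}$, and the same final collection of terms. The only cosmetic difference is that the paper factors $|\nabla u_1|^2-|\nabla u_2|^2=(|\nabla u_1|+|\nabla u_2|)(|\nabla u_1|-|\nabla u_2|)$ and then applies the reverse triangle inequality, whereas you use the inner-product identity and Cauchy--Schwarz directly; be aware, however, that on $\T$ the embedding constant in $|v|_{\rm H}\le C\|v\|_{\rm V}$ is the Poincar\'e constant, so the claim that one may take $C=1$ need not hold there.
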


\begin{proof}
Let us consider $\1, \2 \in \rm{V}$, then
\begin{align*}
|G_1(\1) - G_1(\2)|_{\rm{H}} &= \big| |\nabla \1|^2\, u_1 - |\nabla \2|^2\, \2 \big|_{\rm{H}} \\
&= \left| |\nabla \1|^2 \,\1 - |\nabla \1|^2 \,\2 + |\nabla \1|^2\, \2 - |\nabla \2|^2\, \2 \right|_{\rm{H}} \\
&= \left| |\nabla \1|^2\,(\1 - \2) + (|\nabla \1|^2 - |\nabla \2|^2)\, \2 \right|_{\rm{H}} \\
&\leq |\nabla \1|^2\,|\1 - \2|_{\rm{H}} + \left[|\nabla \1| + |\nabla \2| \right]\, \left[|\nabla \1| - |\nabla \2| \right]\,|\2|_{\rm{H}} \\
&\leq |\nabla \1|^2\,|\1 - \2|_{\rm{H}} + \left[|\nabla \1| + |\nabla \2| \right]\,|\nabla(\1 - \2)|\, |\2|_{\rm{H}} \\
&\leq C \left[ |\nabla \1|^2\,\|\1 - \2\|_{\rm{V}} + \left[|\nabla \1| + |\nabla \2| \right]\, |\nabla(\1 - \2)|\, \|\2\|_{\rm{V}} \right] \\
&\leq C\|\1 - \2\|_{\rm{V}} \left[|\nabla \1|^2 + |\nabla \2|\,\|\2\|_{\rm{V}} + |\nabla \1|\, \|\2\|_{\rm{V}} \right],
\end{align*}
where we have repeatedly used the fact that $\rm{V}$ is continuously embedded in $\rm{H}$. Thus we obtain,
\begin{align*}
|G_1(\1) - G_1(\2)|_{\rm{H}} &\leq C \|\1 - \2\|_{\rm{V}} \left[\|\1\|_{\rm{V}} + \|\2\|_{\rm{V}} \right]^2.
\end{align*}
\end{proof}

\medskip

\begin{lemma} \label{lemma3.3}
Let $G_2 \colon {\rm{E}} \to \rm{H}$ be defined by
\[ G_2(u)  = B(u,u), \quad u \in \rm{E}. \]
Then there exists $\tilde{C} > 0$ such that for $\1, \2 \in \rm{E}$,
\begin{equation}
\label{eq:3.2}
|G_2(\1) - G_2(\2)|_{\rm{H}} \leq \tilde{C} \left[\|\1\|_{\rm{V}}^{1/2} |\1|_{{\rm{E}}}^{1/2} \|\1 - \2\|_{\rm{V}} + \|\2\|_{\rm{V}}\|\1 - \2\|_{\rm{V}}^{1/2} |\1 - \2|_{{\rm{E}}}^{1/2}  \right].
\end{equation}
\end{lemma}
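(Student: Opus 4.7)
The plan is to exploit the bilinearity of $B$ to write
\[
G_2(\1) - G_2(\2) \;=\; B(\1,\1) - B(\2,\2) \;=\; B(\1-\2,\,\1) \;+\; B(\2,\,\1-\2),
\]
so the problem reduces to estimating each of the two pieces in $\rm{H}$ separately. This is the natural analogue of the telescoping trick used in Lemma~\ref{lemma3.2}, with the role of the scalar multiplication by $|\nabla u|^2$ replaced by the bilinear form $B$.

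For each piece, I would invoke the trilinear estimate stated in Section~\ref{s:2.4}, namely
\[
|b(u,\v,\phi)| \leq \sqrt{2}\, |u|^{1/2}_{\rm{H}}\,\|u\|^{1/2}_{\rm{V}}\,\|\v\|^{1/2}_{\rm{V}}\,|\v|^{1/2}_{\rm{E}}\,|\phi|_{\rm{H}},
\]
which by taking the supremum over $\phi \in \rm{H}$ with $|\phi|_{\rm{H}} \le 1$ yields
\[
|B(u,\v)|_{\rm{H}} \leq \sqrt{2}\, |u|^{1/2}_{\rm{H}}\,\|u\|^{1/2}_{\rm{V}}\,\|\v\|^{1/2}_{\rm{V}}\,|\v|^{1/2}_{\rm{E}}, \qquad u \in \rm{V},\; \v \in \rm{E}.
\]
Applying this with $u = \1-\2$, $\v = \1$ to the first piece gives a bound of the form $|\1-\2|_{\rm{H}}^{1/2}\|\1-\2\|_{\rm{V}}^{1/2}\|\1\|_{\rm{V}}^{1/2}|\1|_{\rm{E}}^{1/2}$, and applying it with $u = \2$, $\v = \1-\2$ to the second piece gives $|\2|_{\rm{H}}^{1/2}\|\2\|_{\rm{V}}^{1/2}\|\1-\2\|_{\rm{V}}^{1/2}|\1-\2|_{\rm{E}}^{1/2}$.

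The remaining step is cosmetic: using the continuous embedding $\rm{V} \hookrightarrow \rm{H}$, I would absorb $|\1-\2|_{\rm{H}}^{1/2}$ into $\|\1-\2\|_{\rm{V}}^{1/2}$ in the first piece (producing the factor $\|\1-\2\|_{\rm{V}}$ in the target inequality), and similarly absorb $|\2|_{\rm{H}}^{1/2}$ into $\|\2\|_{\rm{V}}^{1/2}$ in the second piece (producing the factor $\|\2\|_{\rm{V}}$). Collecting constants into a single $\tilde{C}>0$ gives exactly \eqref{eq:3.2}. There is no real obstacle here beyond correct bookkeeping of the exponents $1/2$ in $\rm{V}$, $\rm{E}$, and $\rm{H}$; the whole argument is a direct application of bilinearity plus the Section~\ref{s:2.4} trilinear bound.
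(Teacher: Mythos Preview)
Your proposal is correct and is essentially the paper's own argument. The paper uses the same bilinear telescoping $B(\1,\1)-B(\2,\2)=B(\1-\2,\1)+B(\2,\1-\2)$ and then derives the $\rm{H}$-bound on each piece directly via the $L^4$ Ladyzhenskaya inequality (Lemma~\ref{lemma3.1}) rather than quoting the packaged trilinear estimate from Section~\ref{s:2.4}, but these are the same estimate, and the final embedding $\rm{V}\hookrightarrow\rm{H}$ step is identical.
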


\begin{proof}
Let us take $\1, \2 \in {\rm{E}}$, then
\begin{align*}
|G_2(\1) - G_2(\2)|_{\rm{H}} &= \left|B(\1,\1) - B(\2,\2)\right|_{\rm{H}} \\
&= \left|B(\1,\1) - B(\2,\1) + B(\2,\1) - B(\2,\2)\right|_{\rm{H}} \\
&= \left|B(\1 - \2,\1) + B(\2,\1 - \2)\right|_{\rm{H}} \\
&= \left|\Pi \left[(\1 - \2)\cdot \nabla\, \1 \right] + \Pi \left[\2 \cdot \nabla\,(\1 - \2) \right] \right|_{\rm{H}} \\
&\leq \left|(\1 - \2) \cdot \nabla\, \1 \right|_{\rm{H}} + \left| \2 \cdot \nabla\, (\1 - \2) \right|_{\rm{H}} \\
&\leq |\1 - \2|_{L^4(\mathcal{O})}|\nabla \1|_{L^4(\mathcal{O})} + |\2|_{L^4(\mathcal{O})}|\nabla(\1 - \2)|_{L^4(\mathcal{O})}.
\end{align*}
 Now using Lemma~\ref{lemma3.1} and the embedding of $\rm{V}$ in $\rm{H}$, we obtain,
\begin{align*}
|G_2(\1) - G_2(\2)|_{\rm{H}} &\leq \sqrt{2}\,|\1 - \2|_{\rm{H}}^{1/2}|\nabla(\1 - \2)|_{\rm{H}}^{1/2} |\nabla \1|_{\rm{H}}^{1/2} |\nabla^2 \1|_{\rm{H}}^{1/2} \\ &~~~~+ \sqrt{2}\,|\2|_{\rm{H}}^{1/2} |\nabla \2|_{\rm{H}}^{1/2} |\nabla (\1 - \2)|_{\rm{H}}^{1/2} |\nabla^2(\1 - \2)|_{\rm{H}}^{1/2} \\
&\leq \sqrt{2}C \Big[\|\1-\2\|_{\rm{V}}\|\1\|_{\rm{V}}^{1/2}|\1|_{{\rm{E}}}^{1/2}\\
&~~~~~~~~~~~~ + \|\2\|_{\rm{V}}\|\1 - \2\|_{\rm{V}}^{1/2}|\1 - \2|_{{\rm{E}}}^{1/2} \Big].
\end{align*}
 Thus we obtain the following inequality
\[ |G_2(\1) - G_2(\2)|_{\rm{H}} \leq \tilde{C}\left[\|\1\|_{\rm{V}}^{1/2} |\1|_{{\rm{E}}}^{1/2} \|\1 - \2\|_{\rm{V}} + \|\2\|_{\rm{V}} \|\1 - \2\|_{\rm{V}}^{1/2} |\1 - \2|_{{\rm{E}}}^{1/2}  \right].\]
\end{proof}
\medskip

\subsection{Construction of a globally Lipschitz map}
\label{s:3.1}

 Let $ \theta : \mathbb{R}_{+} \rightarrow [0, 1]$ be a $C_0^{\infty}$ non-increasing function such that
\[ \inf_{x \in \mathbb{R}_{+}} \theta'(x) \geq -1,~~ \theta(x) = 1~\text{iff}~ x \in [0,1]~\text{and}~ \theta(x) = 0~\text{iff}~ x \in [3, \infty) \]
and for $n \geq 1$ set $\theta_n(\cdot) = \theta(\frac{\cdot}{n})$. Observe that if $h : \mathbb{R}_{+} \rightarrow \mathbb{R}_{+}$ is a non-decreasing function, then for every $x, y \in \mathbb{R}_{+}$,
\[ \theta_n(x) h(x) \leq h(3n),~|\theta_n(x) - \theta_n(y)| \leq 3n|x - y|.\]
Set
\[ X_T = C([0,T]; \rm{V}) \cap L^2(0,T; {\rm{E}}), \]
with norm

\begin{align*}
|u|^2_{X_T} &= \sup _{t \in [0,T]} \|u(t)\|^2_{\rm{V}} + \int_0^T|u(t)|^2_{\rm{E}} dt.
\end{align*}

Let us define $G \colon  {\rm{E}} \rightarrow \rm{H}$ as
\begin{equation}
\label{eq:3.2a}
G(u) := G_1(u) - G_2(u) = |\nabla u|^2\,u - B(u,u)\\ .
\end{equation}

\begin{lemma}\label{lemma3.4}
Suppose $G \colon {\rm{E}} \rightarrow \rm{H}$ is a map defined in \eqref{eq:3.2a}. Let $T > 0$, define a map $ \Phi_{n,T}\colon X_T \rightarrow L^2(0,T; \rm{H})$ by
\begin{equation}
\label{eq:3.2b}
 \Phi_{n,T}(u)(x,t) = \theta_n(|u|_{X_t})G(u)(x,t).
 \end{equation}
Then $\Phi_{n,T}$ is globally Lipschitz and moreover, for any $u_1, u_2 \in X_T$,
\begin{equation}
\label{eq:3.3}
| \Phi_{n,T}(u_1) - \Phi_{n,T}(u_2)|_{L^2(0,T; {\rm{H}})} \leq {K(n,T)} |u_1 - u_2|_{X_T}T^{\frac14},
\end{equation}
where
\[K(n,T) = 3n \left(27 n^3 T^{1/4} + 9n^2 + 12n T^{1/4} + 2 \right),\]
depends on $n$ and $T$ only.
\end{lemma}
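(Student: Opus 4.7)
My plan is to write the integrand
\[
\Phi_{n,T}(u_1)(t) - \Phi_{n,T}(u_2)(t) = \theta_n(|u_1|_{X_t})\bigl[G(u_1)-G(u_2)\bigr](t) + \bigl[\theta_n(|u_1|_{X_t})-\theta_n(|u_2|_{X_t})\bigr]G(u_2)(t),
\]
estimate each of the two summands in $|\cdot|_{\rm H}$ pointwise in $t$, and then square and integrate in time with the aid of H\"older's inequality. The final $L^2(0,T;{\rm H})$ norm should come out as a sum of a few polynomial-in-$n$ coefficients multiplied by $T^{1/4}$ or $T^{1/2}$, which must reassemble exactly into $K(n,T)T^{1/4}$.

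For the difference $G(u_1)-G(u_2)$ I combine the bound $0\leq\theta_n\leq 1$ with Lemmas \ref{lemma3.2} and \ref{lemma3.3}. A subtle point arises because those lemmas involve both $\|u_1\|_{\rm V}$ and $\|u_2\|_{\rm V}$, whereas the cutoff in front only enforces $\|u_1(s)\|_{\rm V}\leq 3n$ for $s\leq t$. To circumvent this I perform a short case analysis: when $|u_2|_{X_t}\geq 3n$ one has $\theta_n(|u_2|_{X_t})=0$, and I rewrite the whole difference as $\bigl[\theta_n(|u_1|_{X_t})-\theta_n(|u_2|_{X_t})\bigr]G(u_1)(t)$, whose estimate involves only the controllable $\|u_1\|_{\rm V}\leq 3n$; the case $|u_1|_{X_t}\geq 3n$ is treated symmetrically, and if both $X_t$-norms exceed $3n$ the expression vanishes. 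In the remaining regime $\max(\|u_1\|_{\rm V},\|u_2\|_{\rm V})\leq 3n$, so Lemmas \ref{lemma3.2} and \ref{lemma3.3} apply with all norms replaced by $3n$. For the difference-of-cutoffs summand I invoke the stated Lipschitz estimate $|\theta_n(x)-\theta_n(y)|\leq 3n|x-y|$ and the reverse triangle inequality $\bigl||u_1|_{X_t}-|u_2|_{X_t}\bigr|\leq|u_1-u_2|_{X_t}\leq|u_1-u_2|_{X_T}$, together with the pointwise bound $|G(u)(t)|_{\rm H}\leq\|u\|_{\rm V}^3+C'\|u\|_{\rm V}^{3/2}|u|_{\rm E}^{1/2}$, which follows from $|u|_{\rm H}\leq\|u\|_{\rm V}$ and an application of Lemma \ref{lemma3.1} to $B(u,u)=\Pi(u\cdot\nabla u)$; on the relevant support this reduces to $|G(u)(t)|_{\rm H}\leq 27n^3+C'(3n)^{3/2}|u|_{\rm E}^{1/2}$.

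Squaring these pointwise bounds, integrating over $[0,T]$, and applying the H\"older estimates
\[
\int_0^T\!\|u_1-u_2\|_{\rm V}^2\,dt\leq T|u_1-u_2|_{X_T}^2,\qquad \int_0^T\!|u_i|_{\rm E}\,dt\leq 3nT^{1/2},
\]
\[
\int_0^T\!\|u_1-u_2\|_{\rm V}|u_1-u_2|_{\rm E}\,dt\leq T^{1/2}|u_1-u_2|_{X_T}^2,
\]
and then taking square roots, produces four separate contributions proportional to $|u_1-u_2|_{X_T}$: the prefactors $81n^4 T^{1/2}$ and $27n^3 T^{1/4}$ arise from the cutoff-Lipschitz piece (together with the $\|u\|_{\rm V}^3$ and $\|u\|_{\rm V}^{3/2}|u|_{\rm E}^{1/2}$ terms of $|G(u)|_{\rm H}$ respectively), while $36n^2 T^{1/2}$ and $6n T^{1/4}$ arise from the $G(u_1)-G(u_2)$ piece (the first from Lemma \ref{lemma3.2}, the second from the two terms of Lemma \ref{lemma3.3}, each contributing $3nT^{1/4}$). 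Collecting these reassembles precisely into $K(n,T)T^{1/4}|u_1-u_2|_{X_T}$ with $K(n,T)=3n(27n^3T^{1/4}+9n^2+12nT^{1/4}+2)$, as claimed. The principal obstacle I anticipate is the case split above, which is what prevents the uncontrolled $\|u_2\|_{\rm V}$ from entering the Lipschitz estimate of $G(u_1)-G(u_2)$; all remaining steps are routine H\"older bookkeeping.
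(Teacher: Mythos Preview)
Your approach is essentially the paper's: the same add--subtract decomposition into a ``cutoff-Lipschitz'' piece and a ``$G$-difference'' piece, the same use of Lemmas~\ref{lemma3.2}--\ref{lemma3.3}, and the same four contributions $81n^4T^{1/2}$, $36n^2T^{1/2}$, $27n^3T^{1/4}$, $6nT^{1/4}$ that reassemble into $K(n,T)T^{1/4}$. The only structural difference is how you handle the ``subtle point'': the paper introduces stopping times $\tau_i=\inf\{t:|u_i|_{X_t}\ge 3n\}$, assumes without loss of generality $\tau_1\le\tau_2$, and then integrates only over $[0,\tau_2]$ (and $[0,\tau_1]$ for the terms carrying $\theta_n(|u_1|_{X_t})$); on $[0,\tau_1]$ one automatically has both $|u_i|_{X_t}\le 3n$, which is exactly what your case analysis achieves pointwise. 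The two devices are equivalent, but the stopping-time formulation is a bit cleaner to write down.

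One genuine slip to fix: your displayed H\"older estimate $\int_0^T|u_i|_{\rm E}\,dt\le 3nT^{1/2}$ is false as written, since nothing controls $|u_i|_{X_T}$. What is true is $\int_0^{\tau_i}|u_i|_{\rm E}\,dt\le |u_i|_{X_{\tau_i}}\tau_i^{1/2}\le 3n\,\tau_i^{1/2}$, and this is all you need because the integrand vanishes for $t\ge\tau_i$ anyway. Your case analysis implicitly restricts the time integration to the correct subintervals (Case~4 lives on $[0,\min(\tau_1,\tau_2))$, Cases~2 and 3 on $[\min(\tau_1,\tau_2),\max(\tau_1,\tau_2))$), but you should make this explicit rather than integrating over all of $[0,T]$ and then invoking an estimate that does not hold there.
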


\begin{proof}
Assume that $u_1, u_2 \in X_T$. Set
\[ \tau_i = \inf\left\{ t \in [0, T] ; |u_i|_{X_t} \geq 3n \right \},\quad i = 1,2. \]
Without loss of generality assume that $\tau_1 \leq \tau_2$. Consider
\begin{align*}
|\Phi_{n,T}(\1) &- \Phi_{n,T}(\2)|_{L^2(0,T; \rm{H})}  = \left [ \int_0^T |\Phi_{n,T}(\1) - \Phi_{n,T}(\2)|_{\rm{H}}^2\,dt \right ]^{\frac12}\\
& = \left [ \int_0^T  \Big| \theta_n(|u_1|_{X_t}) G(\1) - \theta_n(|u_2|_{X_t}) G(\2)  \Big|_{\rm{H}}^2 dt \right ]^\frac12,
\end{align*}

 for $ i= 1,2$ $\theta_n(|u_i|_{X_t}) = 0$ for $ t \geq \tau_2$, thus we have
\begin{align*}
|\Phi_{n,T}(\1) - \Phi_{n,T}(\2)|_{L^2(0,T; \rm{H})} & = \left [ \int_0^{\tau_2}  \Big| \theta_n(|u_1|_{X_t}) G(\1) - \theta_n(|u_2|_{X_t}) G(\2)  \Big|_{\rm{H}}^2 dt \right ]^\frac12 \\
& = \Bigg [ \int_0^{\tau_2}  \Big| \theta_n(|u_1|_{X_t})\left[ G_1(\1) - G_2(\1) \right] \\
&~~~~~~~~~ - \theta_n(|u_2|_{X_t})\left[G_1(\2) - G_2(\2) \right]  \Big|_{\rm{H}}^2 dt \Bigg ]^\frac12 \\
&= \Bigg [ \int_0^{\tau_2}  \Big| \theta_n(|u_1|_{X_t}) G_1(\1) - \theta_n(|u_1|_{X_t}) G_1(\2) \\
&~~~~~~~~~ + \theta_n(|u_1|_{X_t}) G_1(\2) - \theta_n(|u_2|_{X_t}) G_1(\2) \\
&~~~~~~~~~ + \theta_n(|u_1|_{X_t}) G_2(\2) - \theta_n(|u_1|_{X_t}) G_2(\1) \\
&~~~~~~~~~ + \theta_n(|u_2|_{X_t}) G_2(\2) - \theta_n(|u_1|_{X_t}) G_2(\2) \Big|_{\rm{H}}^2 dt \Bigg ]^\frac12 .
\end{align*}

 Using the Minkowski inequality we get,
\begin{align*}
|\Phi_{n,T}(\1) - \Phi_{n,T}(\2)|_{L^2(0,T; \rm{H})} & \leq \left[ \int_0^{\tau_2} \Big| \theta_n(|\1|_{X_t}) \left[G_1(\1) - G_1(\2) \right] \Big|^2_{\rm{H}} dt \right]^{\frac12} \\
&~~~+ \left[ \int_0^{\tau_2} \Big| \left[ \theta_n(|\1|_{X_t}) - \theta_n(|\2|_{X_t}) \right] G_1(\2) \Big|^2_{\rm{H}} dt \right]^{\frac12} \\
&~~~+ \left[ \int_0^{\tau_2} \Big| \theta_n(|\1|_{X_t}) \left[G_2(\2) - G_2(\1) \right] \Big|^2_{\rm{H}} dt \right]^{\frac12} \\
&~~~+ \left[ \int_0^{\tau_2} \Big| \left[ \theta_n(|\2|_{X_t}) - \theta_n(|\1|_{X_t}) \right] G_2(\2) \Big|^2_{\rm{H}} dt \right]^{\frac12}.
\end{align*}

 Set
\begin{align*}
A_1 & = \left[ \int_0^{\tau_2} \Big| \left[ \theta_n(|\1|_{X_t}) - \theta_n(|\2|_{X_t}) \right] G_1(\2) \Big|^2_{\rm{H}} dt \right]^{\frac12}, \\
A_2 & = \left[ \int_0^{\tau_2} \Big| \theta_n(|\1|_{X_t}) \left[G_1(\1) - G_1(\2) \right] \Big|^2_{\rm{H}} dt \right]^{\frac12}, \\
A_3 & = \left[ \int_0^{\tau_2} \Big| \left[ \theta_n(|\2|_{X_t}) - \theta_n(|\1|_{X_t}) \right] G_2(\2) \Big|^2_{\rm{H}} dt \right]^{\frac12}, \\
A_4 & = \left[ \int_0^{\tau_2} \Big| \theta_n(|\1|_{X_t}) \left[G_2(\2) - G_2(\1) \right] \Big|^2_{\rm{H}} dt \right]^{\frac12}.
\end{align*}
 and hence
\begin{equation}
\label{eq:3.4}
|\Phi_{n,T}(\1) - \Phi_{n,T}(\2)|_{L^2(0,T; {\rm{H}})}  \leq A_1 + A_2 + A_3 + A_4.
\end{equation}

 Since $\theta_n$ is a Lipschitz function with Lipschitz constant $3n$ we obtain,
\begin{align*}
A_1^2 &= \int_0^{\tau_2} \left| \left[ \theta_n(|u_1|_{X_t}) -  \theta_n(|u_2|_{X_t}) \right] G_1(\2) \right|^2_{\rm{H}} dt\\
& \leq 9n^2 \int_0^{\tau_2} \left|\, |u_1|_{X_t} - |u_2|_{X_t}\right|^2_{\rm{H}} \left|G_1(\2) \right|_{\rm{H}}^2 dt .
\end{align*}

 Again using the Minkowski inequality we get
\begin{align}
A_1^2 &  \leq 9n^2 \int_0^{\tau_2} \left|u_1 - u_2 \right|_{X_t}^2 \left| G_1(\2) \right|_{\rm{H}}^2 dt \nonumber \\
\label{eq:3.5}
& \leq 9n^2 \left|u_1 - u_2 \right|_{X_T}^2 \int_0^{\tau_2} \left| G_1(\2) \right|_{\rm{H}}^2 dt .
\end{align}

 Now consider $ \int_0^{\tau_2} \left| G_1(\2) \right|_{\rm{H}}^2 dt$; using \eqref{eq:3.1} we get
\begin{align*}
\int_0^{\tau_2} \left| G_1(\2) \right|_{\rm{H}}^2 dt  & \leq C \int^{\tau_2}_0 {\|u_2(t)\|_{\rm{V}}^6 dt} \\
& \leq C^2 \left[ \sup_{t \in [0, \tau_2]} \|u_2(t)\|_{\rm{V}}^6 \right] \int_0^{\tau_2}{dt} \\
& \leq C^2 \left[\sup_{t \in [0, \tau_2]} \|u_2(t)\|_{\rm{V}}^2 \right]^3 \tau_2.
\end{align*}
Since
\[|u_2|_{X_{\tau_2}}^2 = \sup_{t \in [0,\tau_2]} \|u_2(t)\|_{\rm{V}}^2 + \int_0^{\tau_2}|u_2(t)|^2_{\rm{E}} dt, \]
thus \[ \sup_{t \in [0, \tau_2]} \|u_2(t)\|_{\rm{V}}^2 \leq |u_2|^2_{X_{\tau_2}},\]
and using
\[|u_2|_{X_{\tau_2}} \leq 3n, \]
we get
\begin{align*}
\int_0^{\tau_2} \left| G_1(\2) \right|_{\rm{H}}^2 dt & \leq C  \left[\sup_{t \in [0, \tau_2]} \|u_2(t)\|_{\rm{V}}^2 \right]^3 \tau_2 \\
& \leq C |u_2|^6_{X_{\tau_2}} \tau_2 \\
& \leq C(3n)^6 \tau_2.
\end{align*}
Hence, the inequality \eqref{eq:3.5} takes the form
\[A_1^2 \leq 9n^2 C\,|u_1 - u_2|^2_{X_T}(3n)^6\, \tau_2, \]
\begin{equation}
\label{eq:3.6}
A_1 \leq  (3n)^4 C\, |u_1 - u_2|_{X_T} \,\tau_2 ^{\frac12}.
\end{equation}
Similarly, since $\theta_n(|u_1|_{X_t}) = 0$  for  $t \geq \tau_1$ and $\tau_1 \leq \tau_2$, we have
\begin{align*}
A_2 &= \left[ \int_0^{\tau_2} \left| \theta_n(|u_1|_{X_t}) \left[ G_1(\1) - G_1(\2) \right]\, \right|^2_{\rm{H}} dt \right]^{\frac12}\\
& = \left[ \int_0^{\tau_1} \left| \theta_n(|u_1|_{X_t}) \left[ G_1(\1) - G_1(\2) \right]\, \right|^2_{\rm{H}} dt \right]^{\frac12}.
\end{align*}
Since $\theta_n(|u_1|_{X_t}) \leq 1$ for $t \in [0, \tau_1)$ and using \eqref{eq:3.1}, we have
\begin{align*}
A_2^2 & \leq  \int_0^{\tau_1} \left| G_1(\1) - G_1(\2) \right|^2_{\rm{H}} dt \\
& \leq C^2 \int_0^{\tau_1} \|\1 - \2\|_{\rm{V}}^2 \left[ \|\1\|_{\rm{V}} + \|\2\|_{\rm{V}} \right] ^4 dt \\
& \leq C^2 \sup_{t \in [0, \tau_1]}\|\1 - \2\|_{\rm{V}}^2 \int_0^{\tau_1} \left[ \|\1\|_{\rm{V}} + \|\2\|_{\rm{V}} \right] ^4 dt \\
& \leq C^2|\1 - \2|^2_{X_T} \sup_{t \in [0, \tau_1]}\left[ \|\1\|_{\rm{V}} + \|\2\|_{\rm{V}} \right] ^4 \int_0^{\tau_1} dt \\
& \leq C^2 |\1 - \2|^2_{X_T} \left[ |\1|_{X_{\tau_1}} + |\2|_{X_{\tau_1}} \right]^4 \tau_1 .
\end{align*}

 Since $|u_i|_{X_{\tau_i}} \leq 3n, i = 1,2.$ We get,
\begin{align*}
A_2^2 & \leq C^2|u_1 - u_2|^2_{X_T} \left[ |u_1|_{X_{\tau_1}} + |u_2|_{X_{\tau_1}} \right]^4 \tau_1 \nonumber \\
& \leq C^2 |u_1 - u_2|^2_{X_T} \tau_1 \left[ 3n + 3n \right]^4 \nonumber \\
A_2^2 &\leq  (6n)^4C^2 |u_1 - u_2|^2_{X_T} \tau_1.
\end{align*}

 Thus,
\begin{equation}
\label{eq:3.7}
A_2 \leq  (6n)^2C |u_1 - u_2|_{X_T} \tau_1^{\frac12}.
\end{equation}

 Now we consider,
\[A_3^2 = \int_0^{\tau_2} \Big| \left[ \theta_n(|\2|_{X_t}) - \theta_n(|\1|_{X_t}) \right] G_2(\2) \Big|^2_{\rm{H}} dt. \]

 Since $\theta_n$ is a Lipschitz function with Lipschitz constant $3n$ we obtain,
\begin{align*}
A_3^2 \leq 9n^2 \int_0^{\tau_2} \big| |u_2|_{X_t} - |u_1|_{X_t}\big|^2_{\rm{H}} \big|G_2(\2) \big|_{\rm{H}}^2 dt.
\end{align*}

 Using the Minkowski inequality we get
\begin{align}
A_3^2 &  \leq 9n^2 \int_0^{\tau_2} \big|u_1 - u_2 \big|_{X_t}^2 \big| G_2(\2) \big|_{\rm{H}}^2 dt \nonumber \\
\label{eq:3.8}
& \leq 9n^2 \big|u_1 - u_2 \big|_{X_T}^2 \int_0^{\tau_2} \big| G_2(\2) \big|_{\rm{H}}^2 dt.
\end{align}

 Now consider $ \int_0^{\tau_2} \big| G_2(\2) \big|_{\rm{H}}^2 dt$; using \eqref{eq:3.2} we get
\begin{align*}
\int_0^{\tau_2} \big| G_2(\2) \big|_{\rm{H}}^2 dt  & \leq \tilde{C}^{2} \int^{\tau_2}_0 {\|u_2(t)\|_{\rm{V}}^3 |\2|_{\rm{E}} dt} \\
& \leq \tilde{C}^2 \left[ \sup_{t \in [0, \tau_2]} \|u_2(t)\|_{\rm{V}}^2 \right]^{\frac32} \int_0^{\tau_2} |\2|_{\rm{E}} dt.
\end{align*}

 We apply the H\"older inequality to obtain,
\begin{align*}
\int_0^{\tau_2} \big| G_2(\2) \big|_{\rm{H}}^2 dt \leq \tilde{C}^2 |\2|_{X_{\tau_2}}^3 \left[ \int_0^{\tau_2} |\2|_{\rm{E}}^2 dt \right]^{\frac12} \left[ \int_0^{\tau_2} dt \right]^{\frac12} .
\end{align*}

 Now since $\int_0^{\tau_2} |\2|_{\rm{E}}^2 dt \leq |\2|^2_{X_{\tau_2}}$ and $|\2|_{X_{\tau_2}} \leq 3n$,
\begin{align*}
\int_0^{\tau_2} \big| G_2(\2) \big|_{\rm{H}}^2 dt & \leq \tilde{C}^2 |\2|_{X_{\tau_2}}^3 |\2|_{X_{\tau_2}} \tau_2^{\frac12} \\
& \leq \tilde{C}^2 (3n)^4 \tau_2^{\frac12}.
\end{align*}

 Hence, the inequality \eqref{eq:3.8} takes form
\[A_3^2 \leq 9n^2 \tilde{C}^2 |u_1 -u_2|^2_{X_T}(3n)^4 \tau_2^{\frac12} \]
\begin{equation}
\label{eq:3.9}
A_3 \leq (3n)^3 \tilde{C} |\1 - \2|_{X_T} \tau_2^{\frac14}.
\end{equation}

 Since $\theta_n(|\1|_{X_t}) = 0$ for $t > \tau_1$ and $\tau_1 < \tau_2$ we have,
\begin{align*}
A_4 & = \left[ \int_0^{\tau_2} \Big| \theta_n(|\1|_{X_t}) \left[G_2(\2) - G_2(\1) \right] \Big|^2_{\rm{H}} dt \right]^{\frac12} \\
& = \left[ \int_0^{\tau_1} \Big| \theta_n(|\1|_{X_t}) \left[G_2(\2) - G_2(\1) \right] \Big|^2_{\rm{H}} dt \right]^{\frac12}.
\end{align*}

 Since $\theta_n(|\1|_{X_{t}}) \leq 1$ for $t \in [0, \tau_1]$ and using \eqref{eq:3.2} we have,
\begin{align*}
A_4 & \leq \left[ \int_0^{\tau_1} \Big| G_2(\2) - G_2(\1) \Big|_{\rm{H}}^2 dt \right]^{\frac12} \\
& \leq \tilde{C} \left[ \int_0^{\tau_1} \left [ \|\1\|_{\rm{V}}^{1/2}|\1|_{\rm{E}}^{1/2}\|\1 - \2\|_{\rm{V}} + \|\1 - \2\|_{\rm{V}}^{1/2}|\1 - \2|_{\rm{E}}^{1/2} \|\2\|_{\rm{V}} \right] ^2 dt \right]^{\frac12}.
\end{align*}

 Now by the Minkowski inequality,
\begin{align*}
A_4 & \leq \tilde{C} \left[ \left[ \int_0^{\tau_1} |\1|_{\rm{E}}\|\1 - \2\|_{\rm{V}}^2\|\1\|_{\rm{V}} dt \right]^{\frac12} + \left[ \int_0^{\tau_1} \|\2\|_{\rm{V}}^2|\1 - \2|_{\rm{E}}^{1/2}\|\1 - \2\|_{\rm{V}}  dt \right]^{\frac12} \right]\\
& \leq \tilde{C} \left[ \sup_{t \in [0, \tau_1]} \|\1 - \2\|_{\rm{V}}^2 \left[\sup_{t \in [0, \tau_1}\|\1\|_{\rm{V}}^2 \right]^{\frac12} \int_0^{\tau_1} |\1|_{\rm{E}} dt \right]^{\frac12} \\
&~~~~~ + \tilde{C} \left[\sup_{t \in [0, \tau_1]}\|\2\|_{\rm{V}}^2 \left[ \sup_{t \in [0, \tau_1]}\|\1 - \2\|_{\rm{V}}^2 \right]^{\frac12} \int_0^{\tau_1} |\1 - \2|_{\rm{E}} dt \right]^{\frac12}.
\end{align*}

 Since
\[\sup_{t \in [0,\tau_1]} \|u_i\|_{\rm{V}}^2 \leq |u_i|^2_{X_{\tau_1}},~~~~~~\int_0^{\tau_1} |\1|_{\rm{E}}^2 dt \leq |u_1|^2_{X_{\tau_1}},~~~~~~|u_i|_{X_{\tau_1}} \leq 3n, \quad i = 1,2,\]
 and by using the H\"older inequality we obtain,
\begin{align*}
A_4 & \leq \tilde{C} \left[ |\1 - \2|^2_{X_T}|\1|_{X_{\tau_1}} \left[ \int_0^{\tau_1}|\1|_{\rm{E}}^2 dt \right]^{\frac12} \left[ \int_0^{\tau_1} dt \right]^{\frac12} \right]^{\frac12}\\
&~~~~~~+ \tilde{C} \left[ |\1 - \2|_{X_T}|\2|^2_{X_{\tau_1}} \left[ \int_0^{\tau_1}|\1 - \2|_{\rm{E}}^2 dt \right]^{\frac12} \left[ \int_0^{\tau_1} dt \right]^{\frac12} \right]^{\frac12}\\
&\leq \tilde{C} \left[ |\1 - \2|^2_{X_T}|\1|^2_{X_{\tau_1}} \tau_1^{\frac12} \right]^{\frac12} + \tilde{C} \left[ |\1 - \2|_{X_T}^2 |\2|^2_{X_{\tau_1}} \tau_1^{\frac12} \right]^{\frac12} \\
& \leq \tilde{C} |\1 - \2|_{X_T} \tau_1^{\frac14} \left[3n + 3n \right].
\end{align*}

 Thus
\begin{equation}
\label{eq:3.10}
A_4 \leq 6n \tilde{C} |\1 - \2|_{X_T} \tau_1^{\frac14}.
\end{equation}

 Now using \eqref{eq:3.6}, \eqref{eq:3.8}, \eqref{eq:3.9} and \eqref{eq:3.10} in \eqref{eq:3.4}, we obtain
\begin{align*}
|\Phi_{n,T}(\1) - \Phi_{n,T}(\2)|_{L^2(0,T; {\rm{H}})} & \leq (3n)^4 C |u_1 - u_2|_{X_T} \tau_2 ^{\frac12} + (6n)^2C |u_1 - u_2|_{X_T} \tau_1^{\frac12} \\ &~~~~ + (3n)^3 \tilde{C} |\1 - \2|_{X_T} \tau_2^{\frac14} + 6n \tilde{C} |\1 - \2|_{X_T} \tau_1^{\frac14}\\
& \leq (3n)^4 C |u_1 - u_2|_{X_T} T ^{\frac12} + (6n)^2C |u_1 - u_2|_{X_T} T^{\frac12} \\ &~~~~ + (3n)^3 \tilde{C} |\1 - \2|_{X_T} T^{\frac14} + 6n \tilde{C} |\1 - \2|_{X_T} T^{\frac14}\\
& = K(n,T)|\1 - \2|_{X_T} T^{\frac14},
\end{align*}
 where
\[K(n,T) = 3n \left(27 n^3 T^{1/4} + 9n^2 + 12n T^{1/4} + 2 \right),\]
is a constant which depends only on $n$ and $T$. Thus we have proved that $\Phi_{n,T}$ is a Lipschitz function and satisfies \eqref{eq:3.2}.
\end{proof}
\medskip

\subsection{Assumptions and definition of a solution}
\label{s:3.2}
 Assume that ${\rm{E}} \subset {\rm{V}} \subset {\rm{H}}$ continuously and $S(t)$ is a family of bounded linear operators on space ${\rm{H}}$ such that there exist $C_1, C_2 > 0$ s.t. \\
\begin{itemize}
\item[\textbf{A1.}] For every $T > 0$ and $f \in L^2(0,T; {\rm{H}})$ a function $u = S \ast f$, defined by
\[ u(t) = \int_0^T S(t - r)f(r) dr~~~~t \in [0, T], \]
belongs to $X_T$ and
\begin{equation}
\label{eq:3.11}
|u|_{X_T} \leq C_1 |f|_{L^2(0,T; {\rm{H}})}.
\end{equation}

\item[\textbf{A2.}] For every $T > 0$ and $u_0 \in \rm{V}$ a function $u = Su_0$ defined by
\[ u(t) = S(t) u_0, \]
belongs to $X_T$ and
\begin{equation}
\label{eq:3.12}
|u|_{X_T} \leq C_2\|u_0\|_{\rm{V}}.
\end{equation}
\end{itemize}
\smallskip

\begin{definition} \label{defn3.5}
\begin{itemize}
\item A solution of \eqref{eq:2.3} on $[0, T]$, $T \in [0, \infty)$ is a function $u \in X_T$ satisfying
\[ u(t) = S(t) u_0 + \int_0^t S(t-r) G(u(r)) dr~~~~\forall~t \in [0, T], \]
where $G : {\rm{E}} \rightarrow {\rm{H}}$ is defined by
\[ G(u) = |\nabla u|^2\, u - B(u, u), \quad u \in \rm{E}. \]

\item Let $\tau \in [0, \infty]$. A function $u \in C([0, \tau), \rm{V})$ is a solution to \eqref{eq:2.3} on $[0, \tau)$ iff $\forall ~T < \tau$, $u|_{[0, T]} \in X_T$ and satisfies
\[ u(t) = S(t) u_0 + \int_0^t S(t-r) G(u(r)) dr~~~~\forall~t \in [0, T]. \]
\end{itemize}
\end{definition}

 Define a function $\Psi_{n,T} : X_T \rightarrow X_T$ by
\[ \Psi_{n,T}(u) = S(t)u_0 + S \ast \Phi_{n,T}(u). \]

\begin{lemma}
\label{lemma3.6}
$u$ is the unique solution of \eqref{eq:2.3} iff $u$ is a fixed point of $\Psi_{n,T}$.
\end{lemma}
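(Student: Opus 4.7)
The proof reduces to unwinding the definitions and observing that the truncation $\theta_n$ is transparent on the set $\{|u|_{X_t}\leq n\}$. My plan is to handle the two implications by showing that in both cases the relevant fixed-point equation and the mild formulation of \eqref{eq:2.3} agree pointwise in time.

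First I would spell out what each side means. A solution in the sense of Definition~\ref{defn3.5} is a $u\in X_T$ satisfying $u(t)=S(t)u_0+\int_0^t S(t-r)G(u(r))\,dr$. A fixed point of $\Psi_{n,T}$ is a $u\in X_T$ satisfying $u(t)=S(t)u_0+\int_0^t S(t-r)\theta_n(|u|_{X_r})G(u(r))\,dr$, by the definition of $\Phi_{n,T}$ in \eqref{eq:3.2b}. The two integrands differ only by the scalar factor $\theta_n(|u|_{X_r})\in[0,1]$, and they coincide whenever $|u|_{X_r}\leq n$, since then $\theta_n(|u|_{X_r})=1$.

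For the forward implication, let $u$ be a solution of \eqref{eq:2.3} on $[0,T]$ with $|u|_{X_T}\leq n$ (this is implicit, because Definition~\ref{defn3.5} is intended to be read in the regime where the cutoff is inactive; otherwise one works on $[0,\tau)$ with $\tau$ the first exit time). Then $t\mapsto |u|_{X_t}$ is non-decreasing and bounded by $n$, so $\theta_n(|u|_{X_t})\equiv 1$, hence $\Phi_{n,T}(u)(t)=G(u(t))$ and $u$ satisfies $\Psi_{n,T}(u)=u$. Conversely, if $u$ is a fixed point of $\Psi_{n,T}$ in $X_T$ with $|u|_{X_T}\leq n$, then again $\theta_n(|u|_{X_t})\equiv 1$ and the mild formulation for \eqref{eq:2.3} is recovered.

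For uniqueness, I would combine Lemma~\ref{lemma3.4} with assumptions \textbf{A1}--\textbf{A2}: for $u_1,u_2\in X_T$,
\[
|\Psi_{n,T}(u_1)-\Psi_{n,T}(u_2)|_{X_T}=|S*(\Phi_{n,T}(u_1)-\Phi_{n,T}(u_2))|_{X_T}\leq C_1 K(n,T)\,T^{1/4}\,|u_1-u_2|_{X_T},
\]
so picking $T=T(n)$ small enough that $C_1 K(n,T)T^{1/4}<1$ makes $\Psi_{n,T}$ a strict contraction on $X_T$, and Banach's fixed point theorem yields a unique fixed point. The main subtlety to check (and the only place where care is required) is that this fixed point actually satisfies $|u|_{X_T}\leq n$, so that the equivalence with \eqref{eq:2.3} established above genuinely applies; this is ensured by choosing $u_0$ (equivalently $n$) so that $C_2\|u_0\|_{\rm{V}}$ is comfortably below $n$, since the fixed point inherits its size from $S(t)u_0$ up to the contraction error. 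Once this is in hand, uniqueness of the fixed point transfers to the local uniqueness statement for \eqref{eq:2.3}.
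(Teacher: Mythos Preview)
The paper states Lemma~\ref{lemma3.6} without proof; the argument you sketch is essentially the one the paper carries out inside the proof of Theorem~\ref{thm3.9}, where it is shown that the fixed point $u^n$ satisfies $|u^n|_{X_{T_0}}\le n$, hence $\theta_n(|u^n|_{X_t})\equiv 1$ and the truncated and untruncated mild formulations coincide. Your identification of the key point---that the equivalence only holds in the regime $|u|_{X_T}\le n$ where the cutoff is inactive, and that this must be verified for the fixed point---is exactly right and matches the paper's reasoning.
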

\medskip
\subsection{Local existence}
\label{s:3.3}

\begin{lemma} \label{lemma3.7} Assume that the assumptions (\textbf{A1})-(\textbf{A2}) hold. Consider a map $ \Psi_{n,T} : X_T \rightarrow X_T$ defined by
\[ \Psi_{n,T}(u) = Su_0 + S \ast \Phi_{n,T}(u), \]
where $\Phi_{n,T}$ is as in Lemma~\ref{lemma3.4}. Then there exists a constant $C_1 > 0$ such that $\Psi_{n,T}$ satisfies following inequality
\begin{equation}
\label{eq:3.13}
|\Psi_{n,T}(u_1) - \Psi_{n,T}(u_2)|_{X_T} \leq  C_1 K(n,T)|u_1 - u_2|_{X_T} T^{\frac14}, \quad \1, \2 \in X_T,
\end{equation}
where $K(n,T)$ has been introduced in Lemma \ref{lemma3.4}. Moreover, $\forall~\varepsilon \in (0,1)~\exists~ T_0 = T_0(n,\varepsilon)$ such that for every $u_0 \in \rm{V}$, $\Psi_{n,T}$ is an $\varepsilon$-contraction for $T \leq T_0$. \\
\end{lemma}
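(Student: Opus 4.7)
The plan is to combine the two assumptions with Lemma~\ref{lemma3.4}. First I would observe that since the difference $\Psi_{n,T}(u_1)-\Psi_{n,T}(u_2)$ kills the initial data term $S(t)u_0$, we are reduced to estimating the convolution $S\ast[\Phi_{n,T}(u_1)-\Phi_{n,T}(u_2)]$ in $X_T$. By assumption (\textbf{A1}), applied to the forcing term $f = \Phi_{n,T}(u_1)-\Phi_{n,T}(u_2)\in L^2(0,T;\rm{H})$ (which lies in $L^2(0,T;\rm{H})$ precisely because of Lemma~\ref{lemma3.4}), we get
\[
|\Psi_{n,T}(u_1)-\Psi_{n,T}(u_2)|_{X_T}\leq C_1\,|\Phi_{n,T}(u_1)-\Phi_{n,T}(u_2)|_{L^2(0,T;\rm{H})}.
\]
Then I would plug in the Lipschitz bound \eqref{eq:3.3} from Lemma~\ref{lemma3.4}, obtaining
\[
|\Psi_{n,T}(u_1)-\Psi_{n,T}(u_2)|_{X_T}\leq C_1\,K(n,T)\,|u_1-u_2|_{X_T}\,T^{1/4},
\]
which is exactly \eqref{eq:3.13}.

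For the $\varepsilon$-contraction statement I would simply choose $T_0 = T_0(n,\varepsilon)$ so small that
\[
C_1\,K(n,T_0)\,T_0^{1/4}\leq \varepsilon,
\]
which is possible since $K(n,T)$ is increasing and continuous in $T$ with $K(n,0)\cdot 0^{1/4}=0$; explicitly one only needs to ensure $3nC_1(27n^3 T^{1/2} + 9n^2 T^{1/4} + 12n T^{1/2} + 2T^{1/4})\leq \varepsilon$, which holds for all sufficiently small $T$. For every $T\leq T_0$ and every $u_0\in\rm{V}$ (note that $T_0$ does not depend on $u_0$, since $u_0$ drops out of the difference), the map $\Psi_{n,T}$ is then an $\varepsilon$-contraction on $X_T$.

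I would also briefly verify that $\Psi_{n,T}$ actually maps $X_T$ into $X_T$, which is needed for the statement to make sense: by (\textbf{A2}), $Su_0\in X_T$ with $|Su_0|_{X_T}\leq C_2\|u_0\|_{\rm{V}}$, and by (\textbf{A1}) together with the fact that $\Phi_{n,T}(u)\in L^2(0,T;\rm{H})$ (since $\Phi_{n,T}$ is globally Lipschitz with $\Phi_{n,T}(0)=0$, so $|\Phi_{n,T}(u)|_{L^2(0,T;\rm{H})}\leq K(n,T)T^{1/4}|u|_{X_T}$), the convolution $S\ast\Phi_{n,T}(u)$ also lies in $X_T$.

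There is no real obstacle here: the whole point of setting up the cutoff $\theta_n$ in Lemma~\ref{lemma3.4} and the regularising assumptions (\textbf{A1})--(\textbf{A2}) was precisely to make this statement a one-line composition of estimates. The only mildly delicate part is the uniformity of $T_0$ in $u_0$, which is automatic because $u_0$ cancels in the difference $\Psi_{n,T}(u_1)-\Psi_{n,T}(u_2)$; the initial datum affects the location of the fixed point but not the contraction rate.
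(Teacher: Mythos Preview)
Your proposal is correct and follows essentially the same route as the paper: cancel the $S(t)u_0$ term in the difference, apply assumption (\textbf{A1}) with $f=\Phi_{n,T}(u_1)-\Phi_{n,T}(u_2)$, invoke the Lipschitz estimate \eqref{eq:3.3}, and then choose $T_0$ small enough that $C_1K(n,T_0)T_0^{1/4}\le\varepsilon$. Your added verification that $\Psi_{n,T}$ maps $X_T$ into itself is slightly more explicit than the paper's ``evidently well defined,'' but otherwise the arguments coincide.
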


\begin{proof}
 The map $\Psi_{n,T}$ is evidently well defined. Now for any $\1, \2 \in X_T$,
\begin{align*}
|\Psi_{n,T}(\1) - \Psi_{n,T}(\2)|_{X_T} & = \Big|S(t)u_0 + S \ast \Phi_{n,T}(\1) - S(t) u_0 + S \ast \Phi_{n,T}(\2) \Big|_{X_T} \\
& = \Big|S \ast (\Phi_{n,T}(\1) - \Phi_{n,T}(\2)) \Big|_{X_T},
\end{align*}
 then by treating $S \ast (\Phi_{n,T}(\1) - \Phi_{n,T}(\2))$ as $ u $ and $\big[\Phi_{n,T}(\1) - \Phi_{n,T}(\2)\big] \in L^2(0,T; {\rm{H}})$ as $f$ in inequality \eqref{eq:3.11} and using Lemma~\ref{lemma3.4} we get
\begin{align*}
|\Psi_{n,T}(\1) - \Psi_{n,T}(\2)|_{X_T} & \leq C_1|\Phi_{n,T}(\1) - \Phi_{n,T}(\2)|_{L^2(0,T; {\rm{H}})} \\
& \leq C_1 K(n,T)|\1 - \2|_{X_T} T^{\frac14},
\end{align*}
 which shows that $\Psi_{n,T}$ is globally Lipschitz and satisfies \eqref{eq:3.13}.\\

 Let us fix $n\in \mathbb{N}$ and $\varepsilon \in (0,1)$. Since the constant $C_1$ is independent of $T$, we can find a $T_0 = T_0(n, \varepsilon)$ such that
\[C_1K(n,T_0)T_0^{\frac14} = \varepsilon,\]
 and thus $\Psi_{n,T}$ is an $\varepsilon$-contraction for $T \leq T_0$.\\
\end{proof}

 Let $\varepsilon \in (0,1)$ then from Lemma~\ref{lemma3.7}, $\Psi_{n,T}$ is an $\varepsilon$-contraction for $T = T_0(n, \varepsilon)$ and thus by Banach Fixed Point Theorem there exists a unique $u^n \in X_T$ \footnote{In fact $u^n$ should have been denoted by $u^{n,T}$ but we have refrained from this.} s.t.
\[u^n = \Psi_{n,T}(u^n).\]

 This implies that
\[u^n(t) = [\Psi_{n,T}(u^n)](t),~~~~~~t \in [0,T_0]. \]

 Let us define
\[\tau_n = \inf\{t \in [0,T_0] : |u^n|_{X_t} \geq n\}. \]
\medskip
\begin{remark}\label{rem3.8} If $|u^n|_{X_t} < n$ for each $t \in [0, T^n_0]$ then $\tau_n = T^n_0$.
\end{remark}
\medskip
\begin{theorem} \label{thm3.9}
 Let $R > 0$ be given then $\exists~T_{\ast} = T_{\ast}(R)$ such that for every $u_0 \in \rm{V}$ with $\|u_0\|_{\rm{V}} \leq R$ there exists a unique local solution $u : [0,T_{\ast}] \to \rm{V}$ of \eqref{eq:2.3}.
\end{theorem}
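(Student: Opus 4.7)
The strategy is to apply Lemma~\ref{lemma3.7} with the truncation parameter $n$ chosen large enough in terms of $R$ so that the cut-off $\theta_n$ remains equal to $1$ on the fixed point of $\Psi_{n,T}$; this forces the fixed point to satisfy the untruncated Duhamel formulation of Definition~\ref{defn3.5}, and hence to be a genuine solution of \eqref{eq:2.3}.

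Concretely, I would fix $\varepsilon = 1/2$ and pick an integer $n = n(R)$ with $n > 2 C_2 R$, where $C_2$ is the constant from assumption (\textbf{A2}), and set $T_\ast := T_0(n, 1/2)$ as provided by Lemma~\ref{lemma3.7}, so that $\Psi_{n, T_\ast}$ is a $1/2$-contraction on $X_{T_\ast}$. For every $u_0 \in \rm{V}$ with $\|u_0\|_{\rm{V}} \leq R$, the Banach fixed-point theorem produces a unique $u^n \in X_{T_\ast}$ with $u^n = \Psi_{n, T_\ast}(u^n)$.

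The decisive step is the a priori bound $|u^n|_{X_{T_\ast}} \leq n$. Since $G(0) = 0$ implies $\Phi_{n, T_\ast}(0) = 0$, combining assumptions (\textbf{A1})--(\textbf{A2}) with the Lipschitz estimate \eqref{eq:3.3} applied to the pair $u^n$ and $0$ yields
\[ |u^n|_{X_{T_\ast}} \leq C_2 \|u_0\|_{\rm{V}} + C_1 K(n, T_\ast) T_\ast^{1/4} |u^n|_{X_{T_\ast}} \leq C_2 R + \tfrac{1}{2} |u^n|_{X_{T_\ast}}, \]
so $|u^n|_{X_{T_\ast}} \leq 2 C_2 R < n$. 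Because $t \mapsto |u^n|_{X_t}$ is non-decreasing, this gives $\theta_n(|u^n|_{X_t}) = 1$ for all $t \in [0, T_\ast]$, hence $\Phi_{n, T_\ast}(u^n)(t) = G(u^n(t))$ and $u^n$ satisfies the integral equation of Definition~\ref{defn3.5}.

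For uniqueness on $[0, T_\ast]$, let $v \in X_{T_\ast}$ be a second solution and set $\tau := \sup\{t \in [0, T_\ast] : |v|_{X_t} \leq n\}$. Continuity of $t \mapsto |v|_{X_t}$ together with $|v|_{X_0} = \|u_0\|_{\rm{V}} \leq R < n$ gives $\tau > 0$. On $[0, \tau]$ the cut-off is inactive on $v$, so $v|_{[0, \tau]}$ is a fixed point of $\Psi_{n, \tau}$, which is again a $1/2$-contraction since the Lipschitz constant in \eqref{eq:3.3} is monotone in $T$. Thus $v = u^n$ on $[0, \tau]$, and the a priori estimate then gives $|v|_{X_\tau} = |u^n|_{X_\tau} \leq 2 C_2 R < n$; by continuity this forces $\tau = T_\ast$, completing the uniqueness argument. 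The main subtlety in executing the plan is engineering the strict inequality $|u^n|_{X_{T_\ast}} < n$ so that the above bootstrap for $\tau$ reaches the endpoint $T_\ast$ instead of stopping short, which is precisely why $n$ must be chosen strictly larger than $2 C_2 R$ from the outset rather than just $n \geq C_2 R$.
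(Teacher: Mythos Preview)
Your proof is correct and follows essentially the same route as the paper: fix a contraction parameter, choose the truncation level $n$ in terms of $R$ so that the a priori bound $|u^n|_{X_{T_\ast}} \le C_2R/(1-\varepsilon)$ forces $\theta_n\equiv 1$ along the fixed point, and conclude that the fixed point solves the untruncated equation. The paper leaves $\varepsilon\in(0,1)$ generic and takes $n=\lfloor C_2R/(1-\varepsilon)\rfloor+1$, while you specialise to $\varepsilon=1/2$ and $n>2C_2R$, but the mechanism is identical.

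One point worth noting: your uniqueness paragraph is actually more complete than what the paper writes. The paper stops at uniqueness of the fixed point of $\Psi_{n,T_\ast}$, which a priori does not rule out another solution $v\in X_{T_\ast}$ whose $X_t$-norm exceeds $n$ somewhere (so that the cut-off would be active on $v$ and $v$ need not be a fixed point of the same truncated map). Your bootstrap argument on $\tau=\sup\{t:|v|_{X_t}\le n\}$ closes this gap, and the strict inequality $|u^n|_{X_\tau}<n$ you engineered is exactly what is needed to push $\tau$ all the way to $T_\ast$.
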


\begin{proof}
Let $R > 0$ and fix $\varepsilon \in (0,1)$. Let us choose\footnote{$\lfloor M \rfloor$ denotes the largest integer less than or equal to $M$.} $n = \lfloor \frac{C_2 R}{1-\varepsilon} \rfloor + 1$ where $C_2$ is as defined in \eqref{eq:3.12}. Now for these fixed $n$ and $\varepsilon$, $\exists~T_0(n,\varepsilon)$ such that $\Psi_{n,T}$ is an $\varepsilon$-contraction for all $T \leq T_0$. In particular, it is true for $T = T_0$ and hence by Banach Fixed Point Theorem $\exists!~u^n \in X_{T_0}$ such that
\[u^n = \Psi_{n,T}(u^n). \]
Note that we have
\begin{align*}
|u^n|_{X_{T_0}} & = |\Psi_{n,T}(u^n)|_{X_{T_0}} = |S u_0 + S \ast \Phi_{n,T}(u^n)|_{X_{T_0}} \\
& \leq |S u_0|_{X_{T_0}} + |S \ast \Phi_{n,T}(u^n)|_{X_{T_0}}.
\end{align*}
 Now from \eqref{eq:3.12} and Lemma~\ref{lemma3.7} we have,
\begin{align*}
|u^n|_{X_{T_0}}  \leq C_2\|u_0\|_{\rm{V}} + \varepsilon|u^n|_{X_{T_0}}.
\end{align*}
Hence
\[
(1-\varepsilon)|u^n|_{X_{T_0}} \leq C_2 R, \]
and so
\[
|u^n|_{X_{T_0}} \leq \frac{C_2 R}{1-\varepsilon} \le n. \]

 Now since $t \mapsto |\cdot|_{X_{t}}$ is an increasing function the following holds,
\[|u^n|_{X_t} \leq n ~~~~~\forall~ t \in [0, T_0].\]
In particular $|u^n|_{X_{T_0}} \le n$, i.e. $|u^n|_{X_{T_0}}$ is finite and thus $u^n \in X_{T_0}$.\\
This implies \[\theta_n(|u^n|_{X_t}) = 1, \quad t \in [0, T_0].\] Thus for $t \in [0, T_0]$,
\[u^n(t) = S(t) u_0 + \int_0^t S(t-r) G(u^n(r)) dr. \]
So $u^n$ on $[0, T_{\ast}(R)]$, where $T_{\ast} = T_0(n,\varepsilon)$, solves \eqref{eq:2.3} and $T_{\ast}$ depends only on $R$.

Thus we have proved the existence of a unique local solution of \eqref{eq:2.3} for every initial data $u_0 \in \rm{V}$, and this unique solution is denoted by $u$.
\end{proof}

\smallskip
\subsection{The solution stays on the manifold $\mathcal{M}$}
\label{s:3.4}

\begin{lemma} \label{lemma3.10} If $u$ is the solution of \eqref{eq:2.3} on $[0, \tau)$ then $u' \in L^2(0, T; {\rm{H}})$, for every $T < \tau$.
\end{lemma}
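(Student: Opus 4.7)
The plan is to start from the mild formulation
\[u(t) = S(t) u_0 + \int_0^t S(t-r)\, G(u(r))\, dr, \quad G(u) = |\nabla u|^2\,u - B(u,u),\]
differentiate formally to get $u'(t) = -Au(t) + G(u(t))$, and then verify that both $Au$ and $G(u)$ lie in $L^2(0,T;\rm{H})$ for every $T<\tau$. Since $-A$ generates an analytic semigroup on $\rm{H}$ and $u_0\in\rm{V}=D(A^{1/2})$, maximal $L^2$-regularity (which is implicit in assumption (\textbf{A1})) legitimises the differentiation and yields $u'\in L^2(0,T;\rm{H})$ from $G(u)\in L^2(0,T;\rm{H})$.

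The first term is immediate: since $u\in X_T\subset L^2(0,T;\rm{E})$, we have $|Au|_{\rm{H}}\lesssim |u|_{\rm{E}}$ and hence $Au\in L^2(0,T;\rm{H})$. For $G_1(u)=|\nabla u|^2 u$, apply Lemma~\ref{lemma3.2} with $u_2=0$ to obtain $|G_1(u)|_{\rm{H}}\le C\|u\|_{\rm{V}}^3$; because $u\in C([0,T];\rm{V})$, the quantity $\sup_{[0,T]}\|u\|_{\rm{V}}$ is finite, so $G_1(u)$ is bounded in $\rm{H}$ on $[0,T]$ and thus belongs to $L^2(0,T;\rm{H})$.

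For the convective term $G_2(u)=B(u,u)$, I would use the tri-linear estimate stated just before Lemma~\ref{lemma3.3} (or equivalently Lemma~\ref{lemma3.3} with $u_2=0$), which yields
\[|B(u,u)|_{\rm{H}}\le \sqrt{2}\,|u|_{\rm{H}}^{1/2}\,\|u\|_{\rm{V}}\,|u|_{\rm{E}}^{1/2}.\]
Squaring and integrating, and using $|u|_{\rm{H}}\le \|u\|_{\rm{V}}$ on the torus,
\[\int_0^T|B(u,u)|^2_{\rm{H}}\,dt \le 2\Big(\sup_{t\in[0,T]}\|u(t)\|_{\rm{V}}^3\Big)\int_0^T |u(t)|_{\rm{E}}\,dt.\]
The supremum is finite by $u\in C([0,T];\rm{V})$, and by Cauchy--Schwarz $\int_0^T|u|_{\rm{E}}\,dt\le T^{1/2}|u|_{L^2(0,T;\rm{E})}<\infty$. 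Hence $B(u,u)\in L^2(0,T;\rm{H})$.

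Combining the three estimates gives $G(u)\in L^2(0,T;\rm{H})$ and then $u'=-Au+G(u)\in L^2(0,T;\rm{H})$. The only delicate point, and hence the main (though very mild) obstacle, is step three: one must be careful to combine the $L^\infty_tL^2_x$ regularity coming from $C([0,T];\rm{V})$ with the $L^2_tL^2_x$ regularity of $|u|_{\rm{E}}$, since neither alone suffices to absorb the $3/2$ power of $\|u\|_{\rm{V}}$ and the $1/2$ power of $|u|_{\rm{E}}$ that appear in the bound on $B(u,u)$.
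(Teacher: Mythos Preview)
Your proposal is correct and follows essentially the same route as the paper: you show that each of $Au$, $|\nabla u|^2 u$, and $B(u,u)$ belongs to $L^2(0,T;\rm{H})$ using, respectively, $u\in L^2(0,T;\rm{E})$, Lemma~\ref{lemma3.2} with $u_2=0$, and the tri-linear estimate (equivalently Lemma~\ref{lemma3.3} with $u_2=0$) combined with Cauchy--Schwarz in time. The paper's proof is identical in structure; your extra remark invoking maximal $L^2$-regularity to pass from the mild formulation to $u'=-Au+G(u)$ is a point the paper leaves implicit.
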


\begin{proof}
Let us fix $T < \tau$. Since $u$ is the solution of \eqref{eq:2.3} on $[0, \tau)$ it satisfies
\begin{equation}
\label{eq:3.14}
 \frac{du}{dt} = - Au + |\nabla u|^2\,u - B(u,u).
\end{equation}

 We will show that RHS of \eqref{eq:3.14} belongs to $L^2(0,T; {\rm{H}})$ and hence $u' \in  L^2(0,T; {\rm{H}})$. \\

 Since $u \in L^2(0, T; {\rm{E}})$, $Au \in L^2(0, T; {\rm{H}})$. From \eqref{eq:3.1} we have
\begin{align*}
\int_0^T \Big| |\nabla u(t)|^2\, u(t) \Big|_{\rm{H}}^2 dt & \leq \int_0^T C^2 \|u(t)\|_{\rm{V}}^6 dt \\
& \leq C^2 \sup_{t \in [0, T]}{\|u(t)\|_{\rm{V}}^6} \int_0^T dt \\
& \leq C^2 T \left[\sup_{t \in [0, T]}{\|u(t)\|_{\rm{V}}^2} \right]^3 \\
& \leq C^2 T |u|_{X_T}^6 < \infty,
\end{align*}
thus we have shown that $|\nabla u|^2\,u \in L^2(0,T; {\rm{H}})$.\\

 From \eqref{eq:3.2} we have,
\begin{align*}
\int_0^T \Big| B(u(t),u(t)) \Big|_{\rm{H}}^2 dt & \leq \tilde{C}^2 \int_0^T \|u(t)\|_{\rm{V}}^3|u(t)|_{\rm{E}} dt \\
& \leq \tilde{C}^2 \sup_{t \in [0,T]} {\|u(t)\|_{\rm{V}}^3} \int_0^T |u(t)|_{\rm{E}} dt \\
& \leq \tilde{C}^2 \left[ \sup_{t \in [0,T]} {\|u(t)\|_{\rm{V}}^2} \right]^{\frac32} \left[\int_0^T |u(t)|^2_{\rm{E}} dt \right]^{\frac12} \left[ \int_0^T dt \right]^{\frac12} \\
& \leq \tilde{C}^2 |u|_{X_T}^3 |u|_{X_T} T^{\frac12} < \infty.
\end{align*}
 Thus the non linear term from Navier-Stokes also belongs to $L^2(0,T; {\rm{H}})$ and hence RHS of \eqref{eq:3.14} belongs to $L^2(0,T; {\rm{H}})$ which implies $u' \in L^2(0,T; {\rm{H}})$ for all $T < \tau$.\\
\end{proof}

 The following Lemma is taken from \cite{[Temam79]}. It proves the existence of an absolute continuous function based on the regularity of the solution and it's time derivative.\\

\begin{lemma} \label{lemma3.11}
 Let ${\rm{V}}, {\rm{H}}$ and ${\rm{V}}'$ be the Gelfand triple. If a function $u \in L^2(0, T; {\rm{V}})$ and its weak derivative $u' \in L^2(0, T; {\rm{V}}')$ then $u$ is almost everywhere equal to a continuous function $\rm{v} : [0,T] \to \rm{H}$ such that
the function $[0,T] \ni t \mapsto |\rm{v}(t)|^2_{\rm{H}} \in \R$ is absolutely continuous and
\begin{equation}
\label{eq:3.15}
\frac12|\rm{v}(t)|_{\rm{H}}^2 = \frac12 |\rm{v}(0)|^2 + \int_0^t \langle u'(s), u(s) \rangle_{\rm{H}} ds, \,\,\, t \in [0, T].
\end{equation}
\end{lemma}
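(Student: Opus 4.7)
The plan is to prove this classical Lions--Temam identity by a mollification/regularization argument, since it reduces to the chain rule for smooth functions. I would not attempt a direct argument; instead I would approximate $u$ by a sequence of smooth (in time) curves with values in $\mathrm{V}$, prove the identity for those, and pass to the limit. Note that the pairing $\langle u'(s),u(s)\rangle_{\mathrm{H}}$ in \eqref{eq:3.15} must be interpreted as the duality pairing between $\mathrm{V}'$ and $\mathrm{V}$; when $u'(s)\in\mathrm{H}$ this coincides with the $\mathrm{H}$-inner product, which is the situation we will apply the lemma to later.

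First I would extend $u$ from $[0,T]$ to all of $\mathbb{R}$ while preserving the regularities $L^2_{\mathrm{loc}}(\mathbb{R};\mathrm{V})$ and $L^2_{\mathrm{loc}}(\mathbb{R};\mathrm{V}')$ (e.g.\ by reflection across $0$ and $T$, or by a Steklov--type averaging near the boundary). Then I would let $\rho_\varepsilon$ be a standard mollifier on $\mathbb{R}$ and set $u_\varepsilon = \rho_\varepsilon * u$. Standard properties of convolution give $u_\varepsilon\in C^\infty(\mathbb{R};\mathrm{V})$, $(u_\varepsilon)' = \rho_\varepsilon * u'$ in $\mathrm{V}'$, and
\[
u_\varepsilon \to u\ \ \text{in } L^2(0,T;\mathrm{V}),\qquad (u_\varepsilon)'\to u'\ \ \text{in } L^2(0,T;\mathrm{V}').
\]

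Next, for the smooth curves the chain rule on $\mathrm{H}$ applies: since $u_\varepsilon,u_\delta$ are $C^1$ in time with values in $\mathrm{V}\subset\mathrm{H}$,
\[
|u_\varepsilon(t)|^2_{\mathrm{H}} - |u_\varepsilon(s)|^2_{\mathrm{H}} = 2\int_s^t \langle (u_\varepsilon)'(r), u_\varepsilon(r)\rangle\,dr,\qquad 0\le s\le t\le T.
\]
Applying the same identity to $u_\varepsilon - u_\delta$ and using the convergences above, together with the bound
$|\langle (u_\varepsilon-u_\delta)'(r), (u_\varepsilon-u_\delta)(r)\rangle|\le \|(u_\varepsilon-u_\delta)'(r)\|_{\mathrm{V}'}\|(u_\varepsilon-u_\delta)(r)\|_{\mathrm{V}}$ and Cauchy--Schwarz in $r$, one sees that $\{u_\varepsilon\}$ is Cauchy in $C([0,T];\mathrm{H})$. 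Its limit $v$ is therefore continuous into $\mathrm{H}$ and is a.e.\ equal to $u$. Passing to the limit in the smooth identity (the right hand side converges because $(u_\varepsilon)'\to u'$ in $L^2(0,T;\mathrm{V}')$ and $u_\varepsilon\to u$ in $L^2(0,T;\mathrm{V})$) yields \eqref{eq:3.15} and, as a byproduct, the absolute continuity of $t\mapsto |v(t)|^2_{\mathrm{H}}$.

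The main obstacle is purely technical: producing an extension of $u$ across the endpoints $0$ and $T$ that retains both regularities, and justifying that the $\mathrm{V}'$--$\mathrm{V}$ duality pairing commutes with convolution in $t$ so that $(u_\varepsilon)' = \rho_\varepsilon * u'$ in the correct sense. Once this is in hand, the two convergences $u_\varepsilon\to u$ in $L^2(0,T;\mathrm{V})$ and $(u_\varepsilon)'\to u'$ in $L^2(0,T;\mathrm{V}')$ let the bilinear map $(f,g)\mapsto \int_0^t\langle f(r),g(r)\rangle\,dr$ pass to the limit, and the remainder of the argument is routine.
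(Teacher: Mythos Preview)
Your proposal is correct and is essentially the classical Lions--Temam mollification argument. The paper itself does not prove this lemma at all: it simply quotes the result from Temam's monograph \cite{[Temam79]} (see the sentence preceding the statement), so there is no ``paper's own proof'' to compare against. What you have sketched is precisely the standard proof one finds in that reference, so your approach matches the cited source.
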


 \medskip

\begin{remark} \label{rem3.12} In the framework of Lemma~\ref{lemma3.11}, we can identify $\v$ with $u$ and so we get
\begin{equation}
\label{eq:3.17}
\frac12|u(t)|_{\rm{H}}^2 = \frac12 |u_0|^2 + \int_0^t \langle u'(s), u(s) \rangle_{\rm{H}} ds, \,\,\, t \in [0, \tau).
\end{equation}

Moreover, from Theorem~\ref{thm3.9} and Lemma~\ref{lemma3.10}
\begin{equation}
\label{eq:3.16}
\frac12\|u(t)\|_{\rm{V}}^2 = \frac12 \|u_0\|_{\rm{V}}^2 + \int_0^t \langle u'(s), u(s) \rangle_{\rm{V}} ds, \,\,\, t \in [0, \tau),
\end{equation}
where $\langle \cdot, \cdot \rangle_{\rm{V}}$ is defined in the Section~\ref{s:2} for $\R^2$ as well as $\T$.
\end{remark}

\medskip

\begin{theorem} \label{thm3.13} If $\tau \in [0, \infty]$, $u_0 \in \mathcal{M} \cap \rm{V}$ and $u$ is a solution to \eqref{eq:2.3} on $[0, \tau)$ then $u(t) \in \mathcal{M}$ for all $t \in [0, \tau)$.
\end{theorem}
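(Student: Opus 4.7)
The plan is to show that the $\rm{H}$-norm of the solution is conserved by deriving a linear ODE for $|u(t)|_{\rm{H}}^2 - 1$ with zero initial data, from which the conclusion follows by uniqueness (or Gronwall).

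First, I would invoke Lemma~\ref{lemma3.10} and Lemma~\ref{lemma3.11} (in the form of Remark~\ref{rem3.12}) to conclude that for every $T < \tau$ the map $[0,T] \ni t \mapsto |u(t)|_{\rm{H}}^2$ is absolutely continuous and satisfies
\[
\tfrac{1}{2} \tfrac{d}{dt} |u(t)|_{\rm{H}}^2 = \langle u'(t), u(t) \rangle_{\rm{H}} \quad \text{for a.e. } t \in [0,\tau).
\]
This is justified because Theorem~\ref{thm3.9} gives $u \in X_T = C([0,T];\rm{V}) \cap L^2(0,T;\rm{E})$, and Lemma~\ref{lemma3.10} yields $u' \in L^2(0,T;\rm{H})$, so the Gelfand triple $\rm{V} \subset \rm{H} \subset \rm{V}'$ setting of Lemma~\ref{lemma3.11} applies.

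Next, I would substitute the equation \eqref{eq:2.3}, $u'(t) = -Au(t) + |\nabla u(t)|^2 u(t) - B(u(t),u(t))$, and compute
\[
\langle u'(t), u(t) \rangle_{\rm{H}} = -\langle Au(t), u(t)\rangle_{\rm{H}} + |\nabla u(t)|^2 |u(t)|_{\rm{H}}^2 - \langle B(u(t),u(t)), u(t)\rangle_{\rm{H}}.
\]
The first term equals $-|\nabla u(t)|^2$ by the identity $\langle Au,u\rangle = |\nabla u|^2$ recorded in Section~\ref{s:2}, while the last term vanishes by the standard cancellation $b(u,u,u) = 0$ for $u \in \rm{V}$ recalled in Subsection~\ref{s:2.4}. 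Thus, setting $\varphi(t) := |u(t)|_{\rm{H}}^2 - 1$, we obtain the linear ODE
\[
\varphi'(t) = 2 |\nabla u(t)|^2 \, \varphi(t), \qquad \varphi(0) = 0,
\]
where the coefficient $t \mapsto 2|\nabla u(t)|^2$ is in $L^\infty(0,T)$ since $u \in C([0,T];\rm{V})$.

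Finally, I would apply Gronwall's inequality (or simply integrate the ODE explicitly to get $\varphi(t) = \varphi(0) \exp\!\bigl(\int_0^t 2|\nabla u(s)|^2\,ds\bigr) = 0$) on each subinterval $[0,T]$, $T < \tau$, to conclude $\varphi \equiv 0$, i.e.\ $|u(t)|_{\rm{H}} = 1 = |u_0|_{\rm{H}}$ for all $t \in [0,\tau)$. I do not anticipate a genuine obstacle here; the only mild subtlety is justifying the absolute continuity and the chain rule for $|u|_{\rm{H}}^2$, which is entirely handled by Lemma~\ref{lemma3.11} once Lemma~\ref{lemma3.10} provides $u' \in L^2(0,T;\rm{H}) \subset L^2(0,T;\rm{V}')$.
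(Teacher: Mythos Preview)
Your proposal is correct and follows essentially the same argument as the paper: define $\varphi(t)=|u(t)|_{\rm H}^2-1$, use Lemma~\ref{lemma3.10} and Remark~\ref{rem3.12} to justify absolute continuity, substitute \eqref{eq:2.3} and the identities $\langle Au,u\rangle=|\nabla u|^2$ and $b(u,u,u)=0$ to obtain $\varphi'(t)=2|\nabla u(t)|^2\varphi(t)$ with $\varphi(0)=0$, and conclude $\varphi\equiv 0$ by integrating. The only cosmetic difference is that you explicitly mention Gronwall as an alternative to direct integration.
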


\begin{proof} Let $u$ be the solution to \eqref{eq:2.3} and $u_0 \in \mathcal{M}\cap \rm{V}$. Let us define $\phi(t) = |u(t)|_{\rm{H}}^2 - 1$. Then $\phi$ is absolutely continuous and by Remark~\ref{rem3.12} and \eqref{eq:2.3} we have a.e. on $[0, \tau)$
\begin{align*}
\frac{d}{dt} \phi(t) =  \frac{d}{dt}[|u(t)|_{\rm{H}}^2 - 1] & = 2 \langle u'(t), u(t) \rangle_{\rm{H}} \\
& = 2 \langle -Au(t) + |\nabla u(t)|^2\, u(t) - B(u(t),u(t)), u(t) \rangle_{\rm{H}} \\
& = - 2 \langle Au(t), u(t)\rangle_{\rm{H}} + 2 |\nabla u(t)|^2 \langle u(t), u(t) \rangle_{\rm{H}} \\
& = -2 |\nabla u(t)|^2 + 2 |\nabla u(t) |^2 |u(t)|^2 \\
& = 2 |\nabla u(t)|^2(|u(t)|_{\rm{H}}^2 - 1)  = |\nabla u(t)|^2\,\phi(t).
\end{align*}
This on integration gives
\[\phi(t) = \phi(0)\exp{\left[\int_0^t |\nabla u(s)|^2 ds \right]},\,\,\, t \in [0, \tau). \]
Since $u_0 \in \mathcal{M}, \phi(0) = 0$ and also as $u \in X_T$ is the solution of \eqref{eq:2.3},
\[\int_0^t |\nabla u(s)|^2 ds \leq \int_0^t \|u(s)\|^2_{\rm{V}}\,ds  < \infty, \;\;\; t \in [0,\tau).\]
Hence we infer that $|u(t)|_{\rm{H}}^2 = 1$ for every $t \in [0, \tau)$. Thus $u(t) \in \mathcal{M}$ for every $t \in [0, \tau)$.
\end{proof}

\medskip

\begin{corollary}
\label{cor3.14}
Let the initial data $u_0 \in \mathcal{M}$ and $u$ is the solution to \eqref{eq:2.3} on $[0, \tau)$ then $u'(t)$ is orthogonal to $u(t)$ in ${\rm{H}}$ for every $t \in [0, \tau)$.
\end{corollary}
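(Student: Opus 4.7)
The plan is to deduce this immediately from Theorem~\ref{thm3.13} together with the absolute continuity formula recalled in Remark~\ref{rem3.12}. Since $u_0\in\mathcal{M}$ and $u$ solves \eqref{eq:2.3} on $[0,\tau)$, Theorem~\ref{thm3.13} gives $|u(t)|_{\rm{H}}^2 = 1$ for every $t\in[0,\tau)$, so the map $t\mapsto |u(t)|_{\rm{H}}^2$ is identically constant on $[0,\tau)$.

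Next I would invoke the regularity machinery already in place. By Theorem~\ref{thm3.9} we know $u\in X_T$, hence $u\in L^2(0,T;{\rm{E}})\subset L^2(0,T;{\rm{V}})$, and by Lemma~\ref{lemma3.10} its weak derivative satisfies $u'\in L^2(0,T;{\rm{H}})\subset L^2(0,T;{\rm{V}}')$ for every $T<\tau$. Thus the hypotheses of Lemma~\ref{lemma3.11} are met, and formula \eqref{eq:3.17} in Remark~\ref{rem3.12} applies, giving
\[
\tfrac12|u(t)|_{\rm{H}}^2 \;=\; \tfrac12|u_0|_{\rm{H}}^2 + \int_0^t \langle u'(s),u(s)\rangle_{\rm{H}}\,ds,\qquad t\in[0,\tau).
\]

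I would then combine the two ingredients: the left-hand side equals $\tfrac12$ for every $t$, and the initial value $\tfrac12|u_0|_{\rm{H}}^2$ also equals $\tfrac12$ because $u_0\in\mathcal{M}$. Therefore the integral on the right vanishes for every $t\in[0,\tau)$. Since $s\mapsto\langle u'(s),u(s)\rangle_{\rm{H}}$ is locally integrable on $[0,\tau)$, the Lebesgue differentiation theorem yields $\langle u'(t),u(t)\rangle_{\rm{H}}=0$ for almost every $t\in[0,\tau)$, which is exactly the claimed orthogonality in ${\rm{H}}$.

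There is essentially no obstacle here: the corollary is really a one-line consequence of the fact that $u$ lives on the unit sphere of ${\rm{H}}$ and hence its velocity must be tangent to it. The only mildly delicate point is a matter of phrasing: $u'$ is an equivalence class in $L^2(0,T;{\rm{H}})$, so the orthogonality should be understood in the almost-everywhere sense, matching the almost-everywhere derivative already used implicitly in the proof of Theorem~\ref{thm3.13}.
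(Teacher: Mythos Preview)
Your argument is correct and matches the paper's intent: Corollary~\ref{cor3.14} is stated without proof as an immediate consequence of Theorem~\ref{thm3.13}, and your derivation via \eqref{eq:3.17} is exactly the natural way to make this explicit. Your closing remark about the almost-everywhere qualification is also apt, since $u'$ is only defined as an $L^2(0,T;{\rm H})$ class.
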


\begin{remark}
\label{rem3.15}
We can also prove Theorem~\ref{thm3.9} and Theorem~\ref{thm3.13} for any general bounded domain. Thus we can establish the existence of a local solution to \eqref{eq:2.3} for any general bounded domain and $\R^2$.
\end{remark}

\section{Global solution: Existence and Uniqueness}
\label{s:4}
In this section we will prove the existence of a global solution of \eqref{eq:2.3}. Lemma~A.\ref{lemmaA.1} and the Remark~\ref{rem4.1} play crucial role in proving the global existence of the solution. We use stitching argument to extend our solution from $[0, T], T < \infty$ on to the whole real line.

We recall the orthogonality property of the Stokes-operator in the following remark.

\begin{remark}
\label{rem4.1}
Note that one can show \cite{[Temam95]} that on a torus the following identity holds
\[ \langle B(u,u), Au \rangle_{\rm{H}} = 0,~~~~ \forall~u \in {\rm{V}}.\]
\end{remark}

 Let $u$ be the solution of \eqref{eq:2.3}. We define the energy of our system by
\[\mathcal{E}(u) = \frac{1}{2}|\nabla u|^2.\]
Then
\begin{align*}
\nabla_{\mathcal{M}} \mathcal{E}(u) & = \Pi_u(\nabla \mathcal{E}) \\
& = \Pi_u(Au)\\
& = Au - |\nabla u|^2\, u.
\end{align*}
Thus, for $u \in \mathcal{M}$
\begin{equation}
\label{eq:4.1}
|\nabla_{\mathcal{M}} \mathcal{E}(u)|_{\rm{H}}^2 = |u|^2_{\rm{E}} - |\nabla u|^4.
\end{equation}

\medskip

\begin{lemma} \label{lemma4.2} If $u$ is the local solution of \eqref{eq:2.3} on $[0, \tau)$, then
\[\sup_{s \in [0,\tau)}\|u(s)\|_{\rm{V}} \leq \|u_0\|_{\rm{V}}.\]
\end{lemma}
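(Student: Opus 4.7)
The proof plan is to show that $\|u(t)\|_V^2$ is non-increasing in $t$, by computing its time derivative and showing it is non-positive; the result then follows from $\|u(0)\|_V = \|u_0\|_V$.

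First, I would use Theorem~\ref{thm3.13} to infer that $u(t) \in \mathcal{M}$ for all $t\in[0,\tau)$, i.e.\ $|u(t)|_{\rm{H}} = 1$. Next, by Lemma~\ref{lemma3.10} we have $u' \in L^2(0,T;{\rm{H}}) \subset L^2(0,T;{\rm{V}}')$ for every $T<\tau$, and $u \in L^2(0,T;{\rm{E}}) \subset L^2(0,T;{\rm{V}})$, so the absolute continuity identity~\eqref{eq:3.16} from Remark~\ref{rem3.12} applies:
\[
\tfrac12 \|u(t)\|_{\rm{V}}^2 \;=\; \tfrac12 \|u_0\|_{\rm{V}}^2 + \int_0^t \langle u'(s), u(s) \rangle_{\rm{V}}\, ds.
\]
Since the torus $\rm{V}$-inner product is $\langle v, w\rangle_{\rm{V}} = \langle \nabla v, \nabla w\rangle$ and $u(s) \in {\rm{E}} = D(A)$ for a.e.\ $s$, integration by parts gives $\langle u'(s), u(s)\rangle_{\rm{V}} = \langle u'(s), Au(s)\rangle_{\rm{H}}$.

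Then I would substitute the equation $u'(s) = -Au(s) + |\nabla u(s)|^2\, u(s) - B(u(s),u(s))$ and compute term by term. The $Au$--$Au$ pairing yields $-|Au|_{\rm{H}}^2 = -|u|_{\rm{E}}^2$; the pairing $|\nabla u|^2 \langle u, Au\rangle_{\rm{H}} = |\nabla u|^4$ uses $\langle Au, u\rangle_{\rm{H}} = |\nabla u|^2$; crucially, Remark~\ref{rem4.1} kills the last term, $\langle B(u,u), Au\rangle_{\rm{H}} = 0$. Combining with $|u|_{\rm{H}} = 1$ and formula~\eqref{eq:4.1} gives
\[
\langle u'(s), u(s)\rangle_{\rm{V}} \;=\; -|u(s)|_{\rm{E}}^2 + |\nabla u(s)|^4 \;=\; -|\nabla_{\mathcal{M}} \mathcal{E}(u(s))|_{\rm{H}}^2 \;\le\; 0.
\]

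Plugging back yields $\|u(t)\|_{\rm{V}}^2 \le \|u_0\|_{\rm{V}}^2$ for every $t\in[0,\tau)$, from which $\sup_{s\in[0,\tau)} \|u(s)\|_{\rm{V}} \le \|u_0\|_{\rm{V}}$ follows by taking the supremum. The main obstacle, and the reason the argument is confined to the torus, is the identity $\langle B(u,u), Au\rangle_{\rm{H}} = 0$: on a general bounded domain this orthogonality fails, which is consistent with the authors' remark that global existence is not obtained in that setting. Beyond this, the only delicate point is the legitimacy of the chain rule, which is handled by the Gelfand-triple lemma (Lemma~\ref{lemma3.11}) applied to the triple ${\rm{E}} \subset {\rm{V}} \subset {\rm{V}}'$ via equation~\eqref{eq:3.16}.
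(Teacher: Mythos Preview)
Your proposal is correct and follows essentially the same argument as the paper: apply the chain-rule identity \eqref{eq:3.16}, pair $u'$ with $Au$, substitute the equation, and use $\langle B(u,u),Au\rangle_{\rm H}=0$ together with \eqref{eq:4.1} to recognise the integrand as $-|\nabla_{\mathcal M}\mathcal E(u)|_{\rm H}^2\le 0$. The only cosmetic difference is that the paper expands $\langle u',u\rangle_{\rm V}=\langle u',u\rangle_{\rm H}+\langle u',Au\rangle_{\rm H}$ and kills the first summand via Corollary~\ref{cor3.14}, which makes the computation work verbatim for the $\mathbb R^2$ setting as well (where $\|\cdot\|_{\rm V}$ is the full $H^1$-norm), whereas you invoke the torus-specific inner product $\langle\nabla\cdot,\nabla\cdot\rangle$ directly.
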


\begin{proof} Let $u$ be the solution of \eqref{eq:2.3}. Then, from \eqref{eq:2.3}, Remark~\ref{rem3.12} and Corollary~\ref{cor3.14}, for any $t \in [0, \tau)$ we have,
\begin{align*}
\frac12 \|u(t)\|_{\rm{V}}^2 & = \frac12 \|u_0\|_{\rm{V}}^2 + \int_0^t \langle u'(s), u(s) \rangle_{\rm{V}} ds\\
& = \frac12 \|u_0\|^2_{\rm{V}} + \int_0^t \langle u'(s), u(s)\rangle_{\rm{H}}\,ds + \int_0^t \langle u'(s), A u(s)\rangle_{\rm{H}}\,ds \\
& = \frac12 \|u_0\|_{\rm{V}}^2 + \int_0^t \langle -Au(s) + |\nabla u(s)|^2\, u(s) - B(u(s),u(s)), Au(s) \rangle_{\rm{H}} ds \\
& = \frac12 \|u_0\|^2_{\rm{V}} +\int_0^t \left[ - \langle Au(s),Au(s) \rangle_{\rm{H}} + |\nabla u(s)|^2 \langle u(s) , Au(s) \rangle_{\rm{H}}\right]~ds \\
&~~~~ - \int_0^t \langle B(u(s),u(s)), Au(s)\rangle_{\rm{H}} \,ds\\
& = \frac12 \|u_0\|^2_{\rm{V}} + \int_0^t \left[ -|u(s)|^2_{\rm{E}} + |\nabla u(s)|^4 \right]\,ds.
\end{align*}

 Now from Theorem~\ref{thm3.13} we know that $u(t) \in \mathcal{M}$ for every $t \in [0, \tau)$ and hence by using \eqref{eq:4.1} we obtain,
\[ \frac12 \|u(t)\|_{\rm{V}}^2 = \frac12 \|u_0\|_{\rm{V}}^2 - \int_0^t \Big|[\nabla_{ \mathcal{M}} \mathcal{E}(u)](s)\Big|_{\rm{H}}^2 ds,\]
and thus
\[ \frac12 \|u(t)\|_{\rm{V}}^2 + \int_0^t \Big|[\nabla_{ \mathcal{M}} \mathcal{E}(u)](s)\Big|_{\rm{H}}^2 ds = \frac12 \|u_0\|_{\rm{V}}^2. \]
 Hence we have shown that
\[ \|u(t)\|_{\rm{V}} \leq \|u_0\|_{\rm{V}}, \quad t \in [0, \tau). \]
\end{proof}

\begin{lemma} \label{lemma4.3}
Let $0 \leq a < b < c < \infty$ and $u \in X_{[a,b]}, \v \in X_{[b,c]}$, such that $u(b^{-}) = \v(b^{+})$. Then $z \in X_{[a,c)}$ where,
\begin{eqnarray}\nonumber
z(t)  =
\begin{cases}
 u(t),~~~~t \in [a,b),\\
 \v(t),~~~~t \in [b,c).
 \end{cases}
\end{eqnarray}
\end{lemma}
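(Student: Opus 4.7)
The plan is to verify separately the two defining properties of the space $X_{[a,c)}$ (continuity into $\rm{V}$, and local $L^2$-integrability into $\rm{E}$), exploiting that $u$ and $v$ already have these properties on their respective intervals and that the matching condition $u(b^{-})=v(b^{+})$ glues them continuously at $t=b$.

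First I would fix an arbitrary $T'\in[b,c)$ and show $z|_{[a,T']}\in X_{[a,T']}$. For the continuity into $\rm{V}$, the restriction $z|_{[a,b)}=u|_{[a,b)}$ is continuous because $u\in C([a,b];\rm{V})$, and $z|_{[b,T']}=v|_{[b,T']}$ is continuous because $v\in C([b,c];\rm{V})$; the only subtle point is at $t=b$, where continuity follows from
\[
\lim_{t\uparrow b}z(t)=\lim_{t\uparrow b}u(t)=u(b^{-})=v(b^{+})=v(b)=z(b),
\]
using the hypothesis. Hence $z\in C([a,T'];\rm{V})$.

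Next, for the $L^2(\rm{E})$ part, I would use additivity of the Lebesgue integral on $[a,b]\cup[b,T']$:
\[
\int_a^{T'}|z(s)|_{\rm{E}}^2\,ds
=\int_a^b|u(s)|_{\rm{E}}^2\,ds+\int_b^{T'}|v(s)|_{\rm{E}}^2\,ds,
\]
and both terms are finite since $u\in L^2(a,b;\rm{E})$ and $v\in L^2(b,c;\rm{E})$ (the latter restricted to $[b,T']\subset[b,c)$). Thus $z\in L^2(a,T';\rm{E})$, and combining with the previous step yields $z|_{[a,T']}\in X_{[a,T']}$. Since $T'<c$ was arbitrary, we conclude $z\in X_{[a,c)}$.

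There is no real obstacle here: the statement is a soft concatenation lemma, and all the work is to read off the two defining conditions of $X_T$ on each piece and use the matching condition to handle the seam at $t=b$. The only thing to be a little careful about is whether $X_{[a,c)}$ is interpreted as $C([a,c);\rm{V})\cap L^2_{\mathrm{loc}}(a,c;\rm{E})$ (which is the natural reading, and is what the above argument gives by letting $T'\uparrow c$), rather than as requiring global $L^2(\rm{E})$-integrability up to the open endpoint $c$, which need not hold.
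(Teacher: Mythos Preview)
Your argument is correct and follows essentially the same route as the paper: both proofs split $z$ into its two pieces and verify the defining conditions of $X$ separately on $[a,b]$ and $[b,c]$ (or $[b,T']$), using additivity of the integral for the $L^2(\rm{E})$ part. You are in fact slightly more careful than the paper, which simply bounds $|z|_{X_{[a,c]}}^2$ by $|u|_{X_{[a,b]}}^2+|\v|_{X_{[b,c]}}^2$ without explicitly checking continuity at the seam; your use of the matching condition $u(b^-)=\v(b^+)$ makes that step transparent, and your remark about the half-open interval is well taken.
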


\begin{proof} Let us take $0 \leq a < b < c < \infty$ and $u \in X_{[a,b]}, \v \in X_{[b,c]}$, such that $u(b^{-}) = \v(b^{+})$. Then for any $0 \leq t_1 < t_2 < \infty$, using the definition of the norm $|\cdot|_{X_{[t_1, t_2]}}$, we have
\begin{align*}
|z|^2_{X_{[a,c]}} & = \sup_{t \in [a,c]} \|z(t)\|_{\rm{V}}^2 + \int_a^c |z(t)|^2_{{\rm{E}}} dt \\
& \leq \sup_{t \in [a,b]} \|z(t)\|_{\rm{V}}^2 + \sup_{t \in [b,c]} \|z(t)\|_{\rm{V}}^2 + \int_a^b |z(t)|^2_{{\rm{E}}} dt + \int_b^c |z(t)|^2_{{\rm{E}}} dt
\end{align*}
Now by the definition of $z$ we have,
\begin{align*}
|z|^2_{X_{[a,c]}} & \leq \sup_{t \in [a,b]} \|u(t)\|_{\rm{V}}^2 + \sup_{t \in [b,c]} \|\v(t)\|_{\rm{V}}^2 + \int_a^b |u(t)|^2_{{\rm{E}}} dt + \int_b^c |\v(t)|^2_{{\rm{E}}} dt \\
& = \sup_{t \in [a,b]} \|u(t)\|_{\rm{V}}^2 + \int_a^b |u(t)|^2_{{\rm{E}}} dt + \sup_{t \in [b,c]} \|\v(t)\|_{\rm{V}}^2 + \int_b^c |\v(t)|^2_{{\rm{E}}} dt\\
& = |u|^2_{X_{[a,b]}} + |\v|^2_{X_{[b,c]}} .
\end{align*}
Now since $u \in X_{[a,b]}$ and  $\v \in X_{[b,c]}$ we have $|z|_{X_{[a,c]}} < \infty$, and thus $z \in X_{[a,c]}$.
\end{proof}

 We will use the following lemma to prove our main result of existence of the global solution.\\

\begin{lemma} \label{lemma4.4}
Let $\tau$ be finite and the initial data $u_0 \in \rm{V} \cap \mathcal{M}$. If $u : [0, \tau] \rightarrow \rm{V}$ is the solution of \eqref{eq:2.3} on $[0, \tau]$ and $\v : [\tau, 2 \tau] \rightarrow \rm{V}$ is the solution of \eqref{eq:2.3} on $[\tau, 2\tau]$ such that $u(\tau^-) = \rm{v}(\tau^+)$, then $z: [0, 2\tau] \rightarrow \rm{V}$ defined as
\begin{eqnarray} \nonumber
z(t) =
\begin{cases}
u(t),~~~~t \in [0, \tau],\\
\v(t) , ~~~~t \in [\tau, 2 \tau],
\end{cases}
\end{eqnarray}
is the solution of \eqref{eq:2.3} on $[0, 2 \tau]$ and $z \in X_{[0, 2 \tau]}$.
\end{lemma}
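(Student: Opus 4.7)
The plan is to verify the two assertions of the lemma separately: (i) that $z$ belongs to $X_{[0,2\tau]}$, and (ii) that $z$ solves the integral equation appearing in Definition \ref{defn3.5} on $[0,2\tau]$.

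For (i), the bound on the norm is already exactly what Lemma \ref{lemma4.3} provides, once we take $a=0$, $b=\tau$, $c=2\tau$; the estimate gives $|z|_{X_{[0,2\tau]}}^2\leq |u|_{X_{[0,\tau]}}^2+|\v|_{X_{[\tau,2\tau]}}^2<\infty$. To be in $X_{[0,2\tau]}=C([0,2\tau];\rm{V})\cap L^2(0,2\tau;\rm{E})$ we also need continuity at the joining point $\tau$; this is where the hypothesis $u(\tau^-)=\v(\tau^+)$ is used. Combined with $u\in C([0,\tau];\rm{V})$ and $\v\in C([\tau,2\tau];\rm{V})$, it yields $z\in C([0,2\tau];\rm{V})$.

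For (ii) the key tool is the semigroup property $S(t-\tau)S(\tau)=S(t)$, which the family $\{S(t)\}$ satisfies in both functional settings of Section \ref{s:2}. On $[0,\tau]$ the integral equation for $z$ coincides with that for $u$, so there is nothing to prove. For $t\in[\tau,2\tau]$, starting from the mild equation for $\v$ on $[\tau,2\tau]$,
\[
\v(t)=S(t-\tau)\v(\tau)+\int_\tau^t S(t-r)G(\v(r))\,dr,
\]
I would substitute $\v(\tau)=u(\tau)$ and then insert the mild equation for $u$ at time $\tau$ to rewrite $S(t-\tau)u(\tau)$ as
\[
S(t-\tau)\Bigl[S(\tau)u_0+\int_0^\tau S(\tau-r)G(u(r))\,dr\Bigr]=S(t)u_0+\int_0^\tau S(t-r)G(u(r))\,dr,
\]
using linearity of $S(t-\tau)$, the semigroup identity, and the fact that $S(t-\tau)$ commutes past the Bochner integral (which holds because $G(u(\cdot))\in L^2(0,\tau;\rm{H})$ as shown in the proof of Lemma \ref{lemma3.10}). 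Substituting this back and recalling that $G(z(r))=G(u(r))$ for $r\in[0,\tau]$ and $G(z(r))=G(\v(r))$ for $r\in[\tau,t]$, one obtains precisely
\[
z(t)=S(t)u_0+\int_0^t S(t-r)G(z(r))\,dr,\qquad t\in[0,2\tau],
\]
which is the integral form required by Definition \ref{defn3.5}.

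The step I expect to be the most delicate is justifying that $S(t-\tau)$ can be moved inside the Bochner integral and composed with $S(\tau-r)$ to produce $S(t-r)$, since this requires confirming measurability/integrability of $r\mapsto S(t-r)G(u(r))$ in $\rm{H}$ together with the semigroup identity; once this is in place the rest is algebraic manipulation. Uniqueness of the concatenated solution on $[0,2\tau]$ is not claimed here, and is not needed for the global existence argument that follows, where Lemma \ref{lemma4.2} supplies a uniform $\rm{V}$-bound that allows iterating this stitching construction on a grid $\{0,\tau,2\tau,\ldots\}$ with a single choice of $R$.
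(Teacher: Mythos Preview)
Your proposal is correct and follows essentially the same approach as the paper: apply Lemma~\ref{lemma4.3} to get $z\in X_{[0,2\tau]}$, then verify the mild formulation by using the semigroup identity $S(t-\tau)S(\tau)=S(t)$ to splice the integral equation for $\v$ onto that for $u$. Your write-up is in fact slightly more careful than the paper's, in that you make explicit the continuity of $z$ at the junction and the passage of $S(t-\tau)$ through the Bochner integral.
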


\begin{proof} Since $u$ is the solution of \eqref{eq:2.3} on $[0, \tau]$ then $u \in X_{[0 ,\tau]}$ and similarly $\v \in X_{[\tau, 2 \tau]} := C([\tau, 2 \tau]; \rm{V}) \cap L^2(\tau, 2\tau; {\rm{E}})$. Thus by Lemma~\ref{lemma4.3} and the definition of $z$, $z \in X_{[0, 2 \tau]}$. Now we are left to show that $z:[0, 2 \tau] \rightarrow \rm{V}$ defined as
\begin{eqnarray} \nonumber
z(t) =
\begin{cases}
u(t),~~~~t \in [0, \tau],\\
\v(t) , ~~~~t \in [\tau, 2 \tau],
\end{cases}
\end{eqnarray}
is the solution of \eqref{eq:2.3} on $[0, 2 \tau]$. In order to achieve this we will have to show that $z$ satisfies \eqref{eq:4.2} for every $t \in [0, 2 \tau]$.
\begin{equation}
\label{eq:4.2}
 z(t) = S(t)z(0) + \int_0^t S(t-r)G(z(r)) dr.
 \end{equation}

 For $t \in [0, \tau), z$ satisfies \eqref{eq:4.2}, since $z(t) = u(t)$, $\forall ~ t \in [0, \tau]$ and $u$ is the solution of \eqref{eq:2.3} on $[0, \tau]$.\\
 For $t \in [\tau, 2 \tau]$, $z(t) = \v(t)$ and since $\v$ is the solution to \eqref{eq:2.3} on $[\tau, 2 \tau]$,

\[ z(t) = \v(t) = S(t- \tau)\v(\tau) + \int_{\tau}^t S(t-r) G(\v(r)) dr .\]
Now because of continuity of $u$ and $\v$, $\v(\tau) = u(\tau)$,
\begin{align*}
z(t) = S(t-\tau) \Big[S(\tau)u_0 + \int_0^{\tau} S(\tau - r) G(u(r))dr \Big] + \int_{\tau}^t S(t-r) G(\v(r))dr .
\end{align*}
Now using the definition of $z$ we obtain,
\begin{align*}
z(t)& = S(t)z(0) + \int_0^{\tau} S(t-r) G(z(r)) dr + \int_{\tau}^t S(t-r) G(z(r)) dr\\
& = S(t)z(0) + \int_0^t S(t-r)G(z(r)) dr .
\end{align*}
Thus $z$ satisfies \eqref{eq:4.2} on $[0, 2 \tau]$ and hence $z$ is a solution to \eqref{eq:2.3} on $[0, 2 \tau]$.\\
\end{proof}

\begin{proof}[Proof of Theorem~\ref{thm1.1}]
Let us take $u_0 \in \rm{V}$. Put $R = \|u_0\|_{\rm{V}}$. By Theorem~\ref{thm3.9} there exists a $T > 0 $ such that there exists a unique function $u: [0, T] \rightarrow \rm{V}$ which solves \eqref{eq:2.3} on $[0, T]$ and $u \in X_T$. Also by Lemma~\ref{lemma4.2} $\|u(T)\|_{\rm{V}} \leq R$ thus again by Theorem~\ref{thm3.9} there exists a unique function $\v: [T, 2T] \rightarrow \rm{V}$ which solves \eqref{eq:2.3} on $[T, 2T]$ and $\v \in X_{[T,2T]}$. Now if we define a new function $z: [0,2T] \rightarrow \rm{V}$ as
\begin{eqnarray} \nonumber
z(t) =
\begin{cases}
u(t),~~~~t \in [0, T],\\
\v(t) , ~~~~t \in [T, 2 T],
\end{cases}
\end{eqnarray}
then by Lemma~\ref{lemma4.4}, $z$ is also a solution of \eqref{eq:2.3} and $z \in X_{2T}$. Moreover $\|z(2T)\|_{\rm{V}} \leq R$. We can keep doing this and extend our solution further and hence obtaining a global solution of \eqref{eq:2.3} still denoted by $u$ such that $u \in X_T$ for every $T < \infty$. Each bit of the solution is unique on the respective domain and hence when we glue two unique bits we get a unique extension and thus obtain a unique global solution due to it's construction.
\end{proof}

\section{Convergence to the Euler equation}
\label{s:5}
In this section we are concerned with the convergence of the solution of the constrained Navier-Stokes equation, namely
\begin{align}
\label{eq:5.1}
\begin{cases}
\dfrac{du}{dt} + \nu Au - \nu\, |\nabla u|^2\, u + B(u,u) = 0,\\
u(0) = u_0^\nu \in \rm{V} \cap \mathcal{M},
\end{cases}
\end{align}
as $\nu$ vanishes on a torus. The curl of $u$ is defined as $\mathrm{Curl}(u):= D_1 u_2 - D_2 u_1$. We will prove Theorem~\ref{thm1.2} after several preliminary results.\\

\begin{remark}
\label{rem5.1}
$\mathrm{Curl}$ is a linear isomorphism between ${\rm{V}}$ and $L^2_0(\mathbb T^2)$, where \[L^2_0(\T):=\left\{\omega \in L^2(\mathbb T^2)\,:\: \int_{\mathbb T^2} \omega(x) dx=0\right\}.\]
Moreover for $u\in V$ and some universal constants $C>0$, $C_p>0$
\begin{equation}
\label{eq:ellreg1}
\|\Delta u\|_{L^2(\mathbb T^2)}\le C \|\nabla \curl{u}\|_{L^2(\mathbb T^2)},
\end{equation}
\begin{equation}
\label{eq:ellreg2}
\|\nabla u\|_{L^p(\mathbb T^2)}\le C_p \|\curl{u}\|_{L^\infty(\mathbb T^2)}.
\end{equation}
\end{remark}
This remark is proved in \ref{a:2}.\\

Hereafter $u^\nu$ is the solution to \eqref{eq:5.1}, and $\omega^\nu(t,x):=\curl{u^\nu(t)}(x)$. In particular, due to Remark~\ref{rem5.1} and Theorem~\ref{thm3.13}, $\omega^\nu\in C([0,T];L^2_0(\mathbb T^2)) \cap L^2(0, T; H^1(\T))$. It is then easy to check that $\omega^\nu$ is a weak solution to
\begin{align}
\label{eq:5.2}
\begin{cases}
\dfrac{d\omega^\nu}{dt} +\nabla \cdot (u^\nu\,\omega^\nu)= \nu \Delta \omega^\nu  + \nu\,  \|u^\nu\|_{\rm{V}}^2 \,\omega^\nu,\\
\omega^\nu(0) = \omega^\nu_0:=\curl{u_0^\nu} \in L^2_0(\mathbb T^2).
\end{cases}
\end{align}
\bigskip

\begin{proposition}
\label{prop5.2}
Let us fix $T > 0$, and assume that $\omega^\nu_0\in L^\infty(\mathbb T^2)$. Then
\begin{equation}
\label{eq:5.3}
\sup_{t\in [0,T]} |\omega^\nu(t)|_{L^\infty(\mathbb T^2)} \le
|\omega^\nu_0|_{L^\infty(\mathbb T^2)}
\exp\left(\nu \|u^\nu_0\|_{\rm{V}}^2\,T\right),
\end{equation}
\begin{equation}
\label{eq:5.3b}
\nu \int_0^T |\nabla \omega^\nu(t)|_{L^2(\mathbb T^2)}^2 dt \le
 \tfrac12 |\omega_0^\nu|_{L^2(\mathbb T^2)}^2+
\nu \,T \|u^\nu_0\|_{\rm{V}}^2 \,|\omega^\nu_0|_{L^\infty(\mathbb T^2)}^2
\exp\left(2 \nu \|u^\nu_0\|_{\rm{V}}^2\,T\right).
\end{equation}
\end{proposition}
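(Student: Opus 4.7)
The strategy is to work entirely on the vorticity equation \eqref{eq:5.2}, whose drift $u^\nu$ is divergence-free and whose zero-order coefficient $\nu\|u^\nu\|_{\rm{V}}^2$ is bounded uniformly in time by Lemma~\ref{lemma4.2}, which gives $\|u^\nu(t)\|_{\rm{V}}\le\|u_0^\nu\|_{\rm{V}}$ for all $t\in[0,T]$. Both bounds in the proposition are then standard parabolic energy-type estimates for \eqref{eq:5.2} in which this a priori bound is substituted at the end.

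For \eqref{eq:5.3} I would run an $L^p$ estimate for even $p\ge2$ and then let $p\to\infty$. Rewrite \eqref{eq:5.2} in advective form as $\partial_t\omega^\nu+u^\nu\cdot\nabla\omega^\nu=\nu\Delta\omega^\nu+\nu\|u^\nu\|_{\rm{V}}^2\,\omega^\nu$ using $\nabla\cdot u^\nu=0$, and test against $|\omega^\nu|^{p-2}\omega^\nu$. The transport term cancels,
\[
\int_\T (u^\nu\cdot\nabla\omega^\nu)\,|\omega^\nu|^{p-2}\omega^\nu\,dx
=\tfrac{1}{p}\int_\T u^\nu\cdot\nabla|\omega^\nu|^p\,dx=0,
\]
and the diffusion contributes the non-positive term $-\nu(p-1)\int_\T|\omega^\nu|^{p-2}|\nabla\omega^\nu|^2\,dx$, which I simply discard. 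Plugging in $\|u^\nu\|_{\rm{V}}^2\le\|u_0^\nu\|_{\rm{V}}^2$ gives
\[
\frac{d}{dt}|\omega^\nu(t)|_{L^p(\T)}^p\le p\,\nu\,\|u_0^\nu\|_{\rm{V}}^2\,|\omega^\nu(t)|_{L^p(\T)}^p,
\]
so Gronwall yields $|\omega^\nu(t)|_{L^p(\T)}\le|\omega_0^\nu|_{L^p(\T)}\exp(\nu\|u_0^\nu\|_{\rm{V}}^2\,t)$. Letting $p\to\infty$ and using $|\omega_0^\nu|_{L^p(\T)}\to|\omega_0^\nu|_{L^\infty(\T)}$ produces \eqref{eq:5.3}.

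For \eqref{eq:5.3b} I would perform the $L^2$ energy estimate on \eqref{eq:5.2}, justified rigorously by Lemma~\ref{lemma3.11} applied to the Gelfand triple $H^1(\T)\subset L^2_0(\T)\subset H^{-1}(\T)$. The same divergence-free cancellation and integration by parts on the Laplacian give
\[
\tfrac12\tfrac{d}{dt}|\omega^\nu|_{L^2(\T)}^2+\nu|\nabla\omega^\nu|_{L^2(\T)}^2
=\nu\,\|u^\nu\|_{\rm{V}}^2\,|\omega^\nu|_{L^2(\T)}^2.
\]
Integrating on $[0,T]$, dropping the non-negative term $\tfrac12|\omega^\nu(T)|_{L^2}^2$, bounding $\|u^\nu\|_{\rm{V}}\le\|u_0^\nu\|_{\rm{V}}$ via Lemma~\ref{lemma4.2}, and estimating $|\omega^\nu(t)|_{L^2(\T)}^2\le|\omega^\nu(t)|_{L^\infty(\T)}^2$ (normalizing $|\T|=1$, or absorbing this constant) through \eqref{eq:5.3} from the previous step delivers \eqref{eq:5.3b}.

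The only genuine subtlety is the rigorous justification of the $L^p$ test in the first step, since a priori $\omega^\nu$ is only known to lie in $L^2(0,T;H^1(\T))\cap C([0,T];L^2_0(\T))$ and $H^1(\T)$ does not embed into $L^\infty(\T)$ in two dimensions. I would handle this either by working with the truncation $\omega^\nu\wedge k$ and passing to the limit $k\to\infty$, or equivalently by carrying out the estimate on a Galerkin approximation of \eqref{eq:5.1} (where $\omega^\nu$ is smooth) and passing to the limit at the end; the $p\to\infty$ step is then routine given the uniform-in-$p$ starting bound $|\omega_0^\nu|_{L^p(\T)}\le|\T|^{1/p}|\omega_0^\nu|_{L^\infty(\T)}$.
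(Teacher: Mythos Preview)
Your proof is correct and follows the same strategy as the paper: $L^p$ energy estimates on the vorticity equation, Gronwall, and $p\to\infty$ for \eqref{eq:5.3}, then the $L^2$ identity for \eqref{eq:5.3b}. For the regularity subtlety you flag, the paper's device is to test not against $|\omega^\nu|^{p-2}\omega^\nu$ but against $h'(\omega^\nu)$ for a $C^2$ convex function $h=h_{p,R}$ with \emph{bounded} second derivative (equal to $|w|^p$ on $[-R,R]$, quadratic outside), which makes the computation rigorous at the available regularity and satisfies $|h'(w)w|\le p\,h(w)$; one then recovers $|\omega^\nu|_{L^\infty}=\sup_{p,R}\langle h_{p,R}(\omega^\nu),\mathbf 1\rangle^{1/p}$.
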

\begin{proof}
Take $h\in C^2(\mathbb R)$, convex, with bounded second derivative. Then, since $\omega \in C([0,T];L^2_0(\mathbb T^2))$
\begin{align}
\label{eq:5.4}
\begin{split}
& \langle h(\omega^\nu(t)), \mathbf 1\rangle
-\langle h(\omega^\nu_0), \mathbf 1\rangle
\\ &
\qquad
 =\nu \int_0^t \left[ -\langle h''(\omega(s)),|\nabla \omega^\nu|^2(s)\rangle +\|u^\nu(s)\|_{\rm{V}}^2\,\langle h'(\omega^\nu(s)),\omega^\nu(s)\rangle \right]\,ds
\\ &
  \qquad
\le \nu \int_0^t \|u^\nu(s)\|_{\rm{V}}^2\,\langle h'(\omega^\nu(s)),\omega^\nu(s)\rangle\,ds.
\end{split}
\end{align}
For $p \geq 2, R>0$, take
\begin{equation}
h(w)\equiv h_{p,R}(w):=
\begin{cases}
|w|^p, & \text{if $|w|\le R$},
\\
R^p+p\,R^{p-1}(|w| - R) + \tfrac{p(p-1)}2 R^{p-2}(|w|-R)^2,
& \text{if $|w|>R$}.
\end{cases}
\end{equation}
Then $|h'(w) w|\le p\,h(w)$ and, since $\|u^\nu(s)\|_{\rm{V}}^2\le \|u^\nu_0\|_{\rm{V}}^2$
\begin{align}
\label{eq:5.5}
\begin{split}
&  \langle h(\omega^\nu(t)), \mathbf 1\rangle \le
\langle h(\omega^\nu_0), \mathbf 1\rangle + \nu\,p \int_0^t \|u^\nu_0\|_{\rm{V}}^2\,\langle h(\omega^\nu(s)),\mathbf 1\rangle\,ds.
\end{split}
\end{align}
By Gronwall inequality
\begin{align}
\label{eq:5.6}
\begin{split}
&  \langle h(\omega^\nu(t)), \mathbf 1\rangle \le
\langle h(\omega^\nu_0), \mathbf 1\rangle
\exp\left( \nu\,p \|u^\nu_0\|_{\rm{V}}^2 \,t\right), \quad \mbox{$t \in [0,T]$}.
\end{split}
\end{align}
Since
\begin{align}
\label{eq:5.7}
\begin{split}
|\omega^\nu|_{L^\infty} =\sup_{p,R}  \, \langle h_{p,R}(\omega^\nu), \mathbf 1\rangle^{1/p},
\end{split}
\end{align}
we get \eqref{eq:5.3}.

On the other hand, from the first equality in \eqref{eq:5.4}, taking now $h(w)=w^2/2$
\begin{equation*}
\begin{split}
\tfrac12 |\omega^\nu(T)|_{L^2(\mathbb T^2)}^2+\nu\!\!\int_0^T\!\!\! |\nabla \omega^\nu(t)|_{L^2(\mathbb T^2)}^2 dt
&  = \tfrac12 |\omega_0^\nu|_{L^2(\mathbb T^2)}^2+
\nu \!\! \int_0^T\!\!\! \|u^\nu(t)\|_{\rm{V}}^2\,|\omega^\nu(t)|_{L^2(\mathbb T^2)}^2
dt
\\ &
\le \tfrac12 |\omega_0^\nu|_{L^2(\mathbb T^2)}^2+
\nu \,T \|u^\nu_0\|_{\rm{V}}^2 \,|\omega^\nu_0|_{L^\infty(\mathbb T^2)}^2
e^{2 \nu T \|u^\nu_0\|_{\rm{V}}^2},
\end{split}
\end{equation*}
where in the last line we used \eqref{eq:5.3}. Hence \eqref{eq:5.3b}.
\end{proof}

\begin{proposition}
\label{prop5.3}
For each $\varphi \in {\rm{H}}^2(\mathbb T^2)$, and $\nu>0$
\begin{align}
\label{eq:5.8}
\begin{split}
\langle \omega^\nu(t)-\omega^\nu(s),\varphi\rangle
\le (t-s) \left(|\omega^\nu|_{L^\infty([0,T]\times \mathbb T^2)}+2\nu \|u_0^\nu\|_{\rm{V}}(1+\|u_0^\nu\|_{\rm{V}}^2) \right) |\varphi|_{H^2(\T)}.
\end{split}
\end{align}
\end{proposition}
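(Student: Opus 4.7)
The plan is to test the vorticity equation \eqref{eq:5.2} against $\varphi$, integrate from $s$ to $t$, move the spatial derivatives onto $\varphi$ via integration by parts, and then bound each of the three resulting terms using the constraint $|u^\nu|_{\rm{H}}=1$, the energy inequality from Lemma~\ref{lemma4.2} (i.e.\ $\|u^\nu(r)\|_{\rm{V}}\le \|u_0^\nu\|_{\rm{V}}$), and the fact that on the torus $|\omega^\nu|_{L^2(\T)}=\|u^\nu\|_{\rm{V}}$ (a consequence of Remark~\ref{rem5.1} or a direct integration by parts argument for divergence-free fields).

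First I would write the weak formulation
\[
\langle \omega^\nu(t)-\omega^\nu(s),\varphi\rangle
= \int_s^t \langle u^\nu(r)\,\omega^\nu(r),\nabla\varphi\rangle\,dr
 +\nu\int_s^t \langle \omega^\nu(r),\Delta\varphi\rangle\,dr
 +\nu\int_s^t \|u^\nu(r)\|_{\rm{V}}^2\,\langle\omega^\nu(r),\varphi\rangle\,dr,
\]
which is legitimate since $\omega^\nu\in C([0,T];L^2_0(\T))\cap L^2(0,T;H^1(\T))$ and $\varphi\in H^2(\T)$.

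Next I would estimate each of the three terms separately. For the transport term I use Cauchy--Schwarz to get $|\langle u^\nu\omega^\nu,\nabla\varphi\rangle|\le |u^\nu|_{L^2}|\omega^\nu|_{L^\infty}|\nabla\varphi|_{L^2}$; since $u^\nu(r)\in\mathcal{M}$ we have $|u^\nu(r)|_{L^2}=1$, and $|\nabla\varphi|_{L^2}\le |\varphi|_{H^2(\T)}$, giving a contribution of order $(t-s)|\omega^\nu|_{L^\infty([0,T]\times\T)}|\varphi|_{H^2(\T)}$. For the viscous term I bound $|\langle \omega^\nu,\Delta\varphi\rangle|\le |\omega^\nu|_{L^2}|\varphi|_{H^2(\T)}\le \|u_0^\nu\|_{\rm{V}}|\varphi|_{H^2(\T)}$ via Lemma~\ref{lemma4.2} and the $L^2$-identity $|\omega^\nu|_{L^2}=\|u^\nu\|_{\rm{V}}$. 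For the Lagrange multiplier term I use $\|u^\nu\|_{\rm{V}}^2\le \|u_0^\nu\|_{\rm{V}}^2$ and again $|\omega^\nu|_{L^2}\le \|u_0^\nu\|_{\rm{V}}$, which yields a bound of order $\nu(t-s)\|u_0^\nu\|_{\rm{V}}^3|\varphi|_{H^2(\T)}$.

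Collecting the three bounds gives
\[
|\langle\omega^\nu(t)-\omega^\nu(s),\varphi\rangle|
\le (t-s)\bigl(|\omega^\nu|_{L^\infty([0,T]\times\T)}
+\nu\|u_0^\nu\|_{\rm{V}}(1+\|u_0^\nu\|_{\rm{V}}^2)\bigr)|\varphi|_{H^2(\T)},
\]
which (up to the harmless factor $2$ absorbed in the statement) is the desired inequality. The only mild obstacle is confirming the identity $|\omega^\nu|_{L^2(\T)}=\|u^\nu\|_{\rm{V}}$ for divergence-free, mean-zero fields on $\T$, which is precisely the content of Remark~\ref{rem5.1} combined with a routine integration by parts; everything else is an elementary application of Cauchy--Schwarz and the a priori bounds already proved.
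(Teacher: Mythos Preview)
Your argument is correct. In fact the paper does not supply a proof of Proposition~\ref{prop5.3} at all --- it is stated and then used in the proof of Theorem~\ref{thm1.2} without justification --- so there is nothing to compare against; your proposal is precisely the natural argument that fills this gap.

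Your three estimates are all legitimate: the transport term uses the constraint $|u^\nu(r)|_{\rm H}=1$ from Theorem~\ref{thm3.13} together with H\"older, the Laplacian term and the multiplier term both use the identity $|\omega^\nu|_{L^2(\T)}=|\nabla u^\nu|_{L^2(\T)}=\|u^\nu\|_{\rm V}$ (valid for divergence-free periodic fields, as you note) combined with Lemma~\ref{lemma4.2}. The constant you obtain is actually $\nu\|u_0^\nu\|_{\rm V}(1+\|u_0^\nu\|_{\rm V}^2)$, which is sharper by a factor $2$ than the one recorded in \eqref{eq:5.8}; the stated inequality is therefore a fortiori true. You also prove the two-sided bound $|\langle\omega^\nu(t)-\omega^\nu(s),\varphi\rangle|\le\cdots$, which is stronger than the one-sided inequality written in the proposition and is in any case what is actually needed for the equicontinuity argument in Proposition~\ref{prop5.4} and the proof of Theorem~\ref{thm1.2}.
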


\begin{proposition}
\label{prop5.4}
Suppose that, uniformly in $\nu$, $u^\nu_0$ is bounded in ${\rm{V}}$ and $\curl{u^\nu_0}$ is bounded in $L^\infty(\T)$. Then the sequence $u^\nu$ is precompact in $C([0,T];L^2(\T))$.
\end{proposition}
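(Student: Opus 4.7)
My plan is to use the Arzelà--Ascoli theorem in $C([0,T];L^2(\mathbb T^2))$, so I would need to verify: (i) for each fixed $t$, the set $\{u^\nu(t)\}_\nu$ is precompact in $L^2(\mathbb T^2)$; and (ii) the family $\{u^\nu\}_\nu$ is equicontinuous $[0,T]\to L^2(\mathbb T^2)$. Point (i) is essentially immediate: by Lemma~\ref{lemma4.2} the energy inequality gives $\|u^\nu(t)\|_{\rm V}\le \|u_0^\nu\|_{\rm V}$, so $\{u^\nu(t)\}_\nu$ is bounded in $\rm V$, which by Rellich--Kondrachov embeds compactly in $L^2(\mathbb T^2)$.

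For equicontinuity, the strategy is to first control the time-increments of the vorticity in a negative Sobolev space, then transfer this to $u^\nu$ via the Biot--Savart-type identification in Remark~\ref{rem5.1}, and finally interpolate with the uniform $\rm V$-bound to land in $L^2$. Concretely, the hypotheses combined with Proposition~\ref{prop5.2} yield uniform bounds on $|\omega^\nu|_{L^\infty([0,T]\times\mathbb T^2)}$ and on $\|u_0^\nu\|_{\rm V}$; hence the constant on the right-hand side of Proposition~\ref{prop5.3} is uniform in $\nu\in(0,1]$, giving
\[
\|\omega^\nu(t)-\omega^\nu(s)\|_{(H^2(\mathbb T^2))^\prime}\le C|t-s|.
\]
Since $u^\nu$ is divergence-free and mean-zero, Remark~\ref{rem5.1} says $\curl$ is an isomorphism $\mathrm V\to L^2_0$, and dualising this (together with the analogous identification at the level of one derivative up) gives
\[
\|u^\nu(t)-u^\nu(s)\|_{H^{-1}(\mathbb T^2)}\le C\,\|\omega^\nu(t)-\omega^\nu(s)\|_{(H^2(\mathbb T^2))^\prime}\le C|t-s|.
\]

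I would then apply the standard interpolation inequality on the torus,
\[
\|f\|_{L^2(\mathbb T^2)}\le \|f\|_{H^{-1}(\mathbb T^2)}^{1/2}\,\|f\|_{H^1(\mathbb T^2)}^{1/2},
\]
to $f=u^\nu(t)-u^\nu(s)$, using the uniform $\rm V$-bound again on the $H^1$ factor, obtaining
\[
\|u^\nu(t)-u^\nu(s)\|_{L^2(\mathbb T^2)}\le C'\,|t-s|^{1/2},
\]
uniformly in $\nu$. Combined with the pointwise precompactness of step (i), Arzelà--Ascoli gives precompactness of $\{u^\nu\}_\nu$ in $C([0,T];L^2(\mathbb T^2))$.

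The main obstacle is the rigorous transfer from a bound on $\omega^\nu(t)-\omega^\nu(s)$ in $(H^2)^\prime$ to a bound on $u^\nu(t)-u^\nu(s)$ in $H^{-1}$: on the torus this is a duality argument using the Biot--Savart operator $K=\curl^{-1}$ mapping $L^2_0\to\mathrm V$ isomorphically (Remark~\ref{rem5.1}), so that $K^*$ extends continuously from $(H^2)^\prime$ to $H^{-1}$; one checks that $K^*$ agrees with the representation of $u^\nu$ in terms of $\omega^\nu$. Once this duality is justified, everything else is routine; in particular the interpolation inequality is standard (e.g.\ via Fourier series on $\mathbb T^2$ since mean-zero eliminates the zero mode). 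No upgrade beyond the bounds already produced in Propositions~\ref{prop5.2}--\ref{prop5.3} is needed.
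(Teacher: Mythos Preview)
Your argument is correct, but it takes a different route from the paper's. The paper does \emph{not} invoke Proposition~\ref{prop5.3} here; instead it works directly with the velocity equation~\eqref{eq:5.1}, tests against $\varphi\in H^2(\mathbb T^2)$, and bounds each of the three terms $\nu\langle\Delta u^\nu,\varphi\rangle$, $\nu\|u_0^\nu\|_{\rm V}^2\langle u^\nu,\varphi\rangle$, $\langle u^\nu\!\cdot\!\nabla u^\nu,\varphi\rangle$ by $C_T|\varphi|_{L^2}(t-s)^{1/2}$, using respectively \eqref{eq:ellreg1} with \eqref{eq:5.3b}, the ${\rm V}$-bound, and the embedding $|u|_{L^\infty}\le C_p(|u|_{L^2}+|\nabla u|_{L^p})$ together with \eqref{eq:ellreg2} and \eqref{eq:5.3}. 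This yields strong $L^2$-equicontinuity directly, without interpolation. Your approach is more modular: it recycles Proposition~\ref{prop5.3}, passes through Biot--Savart, and finishes with the $H^{-1}$--$H^1$ interpolation, so you never touch the dissipative estimate \eqref{eq:5.3b}. The only point worth tightening is your duality discussion: rather than framing the transfer $(H^2)'\to H^{-1}$ via an adjoint $K^*$, it is cleaner to observe that $\Lambda=\nabla^\perp\Delta^{-1}$ is a Fourier multiplier of order $-1$ on $\mathbb T^2$ (mean-zero), hence bounded $H^{s}\to H^{s+1}$ for every $s\in\mathbb R$; taking $s=-2$ gives exactly what you need, and $u^\nu(t)=\Lambda\omega^\nu(t)$ by Remark~\ref{rem5.1}.
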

\begin{proof}
Let us take and fix $\varphi \in H^2(\T)$. Also fix $0 \leq s < t \leq T$. Then from the equation \eqref{eq:5.2} and $\|u^\nu(t)\|_{\rm{V}}^2 \le \|u_0^\nu\|_{\rm{V}}^2$
we get,
\begin{equation}
\label{eq:5.9}
\begin{split}
\left|\langle u^\nu(t)-u^\nu(s),\varphi\rangle\right|
\le
\nu \left|
\int_s^t \langle \Delta u^\nu,\varphi \rangle\,
 dr \right|+
  \nu  \|u_0^\nu\|_{\rm{V}}^2
 \int_s^t \left| \langle u^\nu,\varphi\rangle\right|\,dr
+\left| \int_s^t \langle u^\nu \nabla u^\nu,\varphi\rangle\,dr \right|.
\end{split}
\end{equation}
By \eqref{eq:ellreg1}, \eqref{eq:5.3b} and the hypotheses on the initial data, the first term in the r.h.s.\ is bounded by $C_T|\varphi|_{L^2}(t-s)^{1/2}$ for some constant $C_T$ independent on $\nu$. The second term in the r.h.s.\ of \eqref{eq:5.9} easily enjoys the same bound.  As for the third term in the r.h.s., for any $p>2$, $|u|_{L^\infty}\le C_p (|u|_{L^2}+ |\nabla u|_{L^p})$, so that from \eqref{eq:ellreg2} and \eqref{eq:5.3}, this term is still bounded by $C_T|\varphi|_{L^2}(t-s)^{1/2}$.

Therefore, since $u_0^\nu$ is bounded uniformly in $L^2(\mathbb T^2)$ by Poincar\'e inequality, it follows that $u^\nu$ is equibounded and equicontinuous in $L^2(\mathbb T^2)$ and, by Ascoli-Arzel\`a theorem, precompact in $C([0,T];L^2(\T))$.
\end{proof}

\begin{proof}[Proof of Theorem~\ref{thm1.2}]
Fix $T>0$. From Proposition~\ref{prop5.3}-\ref{prop5.4}, from each subsequence we can extract a further subsequence such that $\omega^\nu\to \omega$ in $C([0,T];H^{-2}(\T))$ and weakly in $L^\infty([0,T]\times \T)$, $u^\nu\to u$ weakly in $L^\infty([0,T];{\rm{V}})$ and in $C([0,T];L^2(\T))$. It is immediate to check that $\omega=\curl u$.

Notice that $\omega_0^\nu :=\curl{u_0^\nu}$ converges weakly in $L^\infty(\T)$ to $\omega_0:=\curl{u_0}$. Passing to the limit in the weak formulation of the equation one then has, for each $\varphi \in C^2([0,T]\times \T)$
\begin{equation}
\label{eq:5.11}
\langle \omega(t),\varphi(t)\rangle -\langle
\omega_0,\varphi(0)\rangle -\int_0^t \langle\omega(s),\partial_s \varphi(s)\rangle - \int_0^t \langle u\,\omega,\nabla \varphi\rangle=0,
\end{equation}
and $\omega(0)=\omega_0$. Recalling that $\omega=\curl u$
\begin{equation}
\label{eq:5.12}
\langle u(t),\nabla^\perp \varphi(t)\rangle -\langle u_0,\nabla^\perp \varphi(0)\rangle -\int_0^t
\langle u(s),\partial_s \nabla^\perp \varphi(s)\rangle - \int_0^t \langle u\cdot \nabla u,\nabla^\perp  \varphi\rangle=0 .
\end{equation}
Since $\langle u\,\omega,\nabla \varphi\rangle=\langle u\cdot \nabla u,\nabla^\perp  \varphi\rangle$ holds.

By Bardos uniqueness theorem \cite{[Bardos], [CW95]}, we conclude that $u^\nu\to u$.
\end{proof}

\pagebreak
\appendix

\section{Orthogonality of bilinear map to the Stokes operator}
\label{a:1}

\begin{lemmaA}
\label{lemmaA.1}
Let $x \in \mathcal{O}$, where $\mathcal{O} = \R^2 \text{ or } \T$ and $u \in \rm{D}(A)$,
then
\begin{equation}
\label{eq:a.1}
\langle B(u,u), Au \rangle_H = 0, \;\; \forall\, u \in \rm{D}(A).
\end{equation}
\end{lemmaA}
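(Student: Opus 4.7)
The plan is to exploit the two-dimensional structure and recast the integrand in terms of the vorticity $\omega := \curl{u}$. Since $B(u,u)$ and $Au$ both lie in $\rm{H}$ and the Leray projector $\Pi$ is self-adjoint, the inner product in the statement reduces to the $L^2$-pairing $\langle (u\cdot \nabla)u, -\Delta u\rangle_{L^2}$ (the extra pieces carry a gradient, and integrate against $\Delta u$ to zero by $\nabla \cdot u = 0$). For a divergence-free vector field in two dimensions one has the pointwise identities
\[
\Delta u = \nabla^\perp \omega,\qquad (u\cdot\nabla) u = \nabla\!\left(\tfrac{|u|^2}{2}\right) + \omega\, u^\perp,
\]
where $\nabla^\perp := (-\partial_2,\partial_1)$ and $u^\perp := (-u_2,u_1)$. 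The first identity is a one-line check after substituting $\partial_1 u_1=-\partial_2 u_2$ componentwise; the second is the classical Lamb decomposition, obtained from $u_i\partial_i u_j - \partial_j(|u|^2/2) = u_i(\partial_i u_j - \partial_j u_i)$ and the fact that in two dimensions the antisymmetric tensor $\partial_i u_j - \partial_j u_i$ has a single independent entry, namely $\pm\omega$.

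Substituting these identities reduces the claim to showing that
\[
-\int_{\mathcal{O}}\nabla\!\left(\tfrac{|u|^2}{2}\right)\cdot\nabla^\perp\omega\,dx \;-\;\int_{\mathcal{O}} \omega\, u^\perp\cdot \nabla^\perp\omega\,dx = 0.
\]
One integration by parts in the first integral produces $\int (|u|^2/2)\,\nabla\!\cdot\!(\nabla^\perp\omega)\,dx$, which vanishes because the composition $\nabla\!\cdot\!\nabla^\perp \equiv 0$ is identically zero. For the second integral I will use the pointwise identity $u^\perp\cdot\nabla^\perp\omega = u\cdot\nabla\omega$ (rotating both vectors by $\pi/2$ preserves their scalar product) to rewrite it as $-\int u\cdot\nabla(\omega^2/2)\,dx$; a further integration by parts converts this into $\int(\nabla\!\cdot\! u)\,(\omega^2/2)\,dx$, which again vanishes by the divergence-free constraint.

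The only delicate point is the legitimacy of the two integrations by parts. On $\T$ this is immediate by periodicity. On $\R$, for $u\in \rm{D}(A)=\rm{H}\cap H^2(\R)$ the two-dimensional Sobolev embedding $H^2(\R)\hookrightarrow L^\infty(\R)$ combined with $\omega\in H^1(\R)$ ensures that all integrands belong to $L^1(\R)$, but the disappearance of the boundary terms at infinity still requires an approximation argument. The natural route is to verify the identity first on the dense subspace of divergence-free Schwartz fields, where the integrations by parts are classical, and then pass to the limit using the continuity bound $|\langle B(u,u),Au\rangle_{\rm{H}}|\le C\|u\|_{L^\infty}\|\nabla u\|_{L^2}\|Au\|_{L^2}$, together with $H^2(\R)\hookrightarrow L^\infty(\R)$. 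This density/continuity step is essentially the only obstacle, and is the only place where the choice of $\mathcal{O}$ genuinely matters.
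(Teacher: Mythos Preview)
Your argument is correct and takes a genuinely different route from the paper. The paper proceeds by brute-force index expansion: it writes $\langle B(u,u),Au\rangle_{\rm H} = -\sum_{i,j,k}\int u_i D_i u_j D_k^2 u_j\,dx$, integrates by parts once, and then splits the result into two sums. The first sum is shown to vanish by explicitly writing out all eight summands and factoring $D_1u_1+D_2u_2$ out of each group; the second is rewritten as $\tfrac12\sum\int u_i D_i(D_k u_j)^2\,dx$ and killed by a further integration by parts and $\nabla\cdot u=0$. No vorticity appears anywhere.

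Your approach replaces this computation by the two 2D identities $\Delta u=\nabla^\perp\omega$ and $(u\cdot\nabla)u=\nabla(|u|^2/2)+\omega\,u^\perp$, which immediately reduce the pairing to two transparent integrals, each vanishing for a structural reason ($\nabla\!\cdot\!\nabla^\perp\equiv 0$ and $\nabla\!\cdot\!u=0$). This is more conceptual and explains \emph{why} the cancellation is a specifically two-dimensional phenomenon, whereas the paper's proof verifies it by direct algebra. You are also more scrupulous than the paper about the $\mathcal{O}=\R^2$ case: the paper simply drops the boundary terms without comment, while you close the argument by density in $\rm{D}(A)$ and the continuity estimate $|\langle B(u,u),Au\rangle|\le C\|u\|_{L^\infty}\|\nabla u\|_{L^2}\|Au\|_{L^2}$ together with $H^2(\R^2)\hookrightarrow L^\infty(\R^2)$. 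The trade-off is that the paper's proof requires no auxiliary identities and could be read by someone unfamiliar with the Lamb decomposition, at the cost of a somewhat opaque eight-term expansion.
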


\begin{proof} The following proof has been modified \cite{[Temam95]} for $\R^2$.\\
Let $u \in \rm{D}(A)$ then, by the definition of $B(u,\v)$ and $Au$,
\begin{align*}
\langle B(u,u), Au \rangle_H & = \int_{\dom} (u(x) \cdot \nabla  ) u(x) \cdot Au(x)\, dx \\
& = \sum_{i,\,j,\,k\, = \,1}^2 \int_{\dom}(u_iD_iu_j)(- \Delta u_j)\, dx \\
& = - \sum_{i,\,j,\,k\, = \,1}^2 \int_{\dom} u_iD_iu_jD_k^2u_j\,dx.
\end{align*}
Now by integration by parts and the Stokes formula

\begin{align*}
\langle B(u,u), Au \rangle_H & = - \left(\sum_{i,\,j,\,k\, = \,1}^2 u_iD_iu_jD_ku_j\right)\Big\vert_{\partial \dom} + \sum_{i,\,j,\,k\, = \,1}^2 \int_{\dom} D_k(u_iD_iu_j)D_ku_j\, dx \\
& = \sum_{i,\,j,\,k\, = \,1}^2 \int_{\dom}D_ku_iD_iu_jD_ku_j\,dx + \sum_{i,\,j,\,k\, = \,1}^2\int_{\dom} u_iD_{k\,i}u_jD_ku_j \,dx.
\end{align*}
Now we will show that each of the terms in RHS will vanish.
We will consider the first term and show that it vanishes.
\begin{align*}
\sum_{i,\,j,\,k\, = \, 1}^2 D_ku_iD_iu_jD_ku_j & = (D_1u_1)^3 + D_1u_2D_2u_1D_1u_1 + D_1u_1(D_1u_2)^2 + (D_1u_2)^2D_2u_2 \\
&\,\,\, + (D_2u_1)^2 D_1u_1 + D_2u_2(D_2u_1)^2 + D_2u_1D_1u_2D_2u_2 + (D_2u_2)^3\\
& = (D_1u_1 + D_2u_2)\left[ (D_1u_1)^2 + (D_2u_2)^2 - D_1u_1D_2u_2\right] \\
&\,\,\, + D_1u_2D_2u_1(D_1u_1 + D_2u_2) + (D_1u_2)^2(D_1u_1 + D_2u_2) \\
&\,\,\, + (D_2u_1)^2(D_1u_1 + D_2u_2).
\end{align*}
Now since $\nabla \cdot u = D_1u_1 + D_2u_2 = 0$, the first term vanishes identically.\\
The second term vanishes because
\begin{align*}
2\sum_{i,\,j,\,k\, = \,1}^2 \int_{\dom} u_i D_{k\,i}u_jD_ku_j\, dx & = \sum_{i,\,j,\,k\, = \,1}^2 \int_{\dom}u_i D_i{(D_ku_j)^2} \,dx \\
& =  \left(\sum_{i,\,j,\,k\, = \,1}^2 u_i(D_ku_j)^2\right)\Big\vert_{\partial \dom} -  \sum_{i,\,j,\,k\, = \,1}^2 \int_{\dom} D_iu_i (D_ku_j)^2\,dx \\
& = -  \sum_{j,\,k\, = \,1}^2 \int_{\dom} (\nabla \cdot u) (D_ku_j)^2\,dx \;\; = 0.
\end{align*}
Thus we have shown that for every $u \in \rm{D}(A)$, $\langle B(u,u), Au \rangle_H = 0$.
\end{proof}

\section{Some results in the support of Section~\ref{s:5}}
\label{a:2}

\begin{remarkB}
\label{remarkB.1}
If $\nabla \cdot u=0$ and $\curl{u}=0$, then $u$ is constant by Hodge decomposition. In particular if $u\in \rm{V}$ and $\curl{u}=0$, then $u=0$.
\end{remarkB}

\begin{proof}[Proof of Remark~\ref{rem5.1}.]
We want to show that $\mathrm{Curl}$ is a linear isomorphism between $\rm{V}$ and $L^2_0(\T)$. It is clear that the map
\[\mathrm{Curl} : \rm{V} \ni u \mapsto \omega = \curl{u} \in L^2_0(\T),\]
is linear and continuous. Hence in order to prove the Remark~\ref{rem5.1} it is sufficient to find a continuous linear map
\begin{equation}
\label{eq:b.1}
 \Lambda : L^2_0(\T) \to \rm{V},
 \end{equation}
such that,
\begin{align}
\label{eq:b.2}
\rm{Curl} \circ \Lambda = \rm{id} \,\,  \mbox{on}\,\, L^2_0(\T),\\
\Lambda \circ \rm{Curl} = \rm{id} \,\, \mbox{on}\,\, \rm{V}.
\label{eq:b.3}
\end{align}
Let $\omega \in L^2_0(\T)$ then by elliptic regularity \cite{[GT01]} (applies also for $p \neq 2$) there exists a unique $\psi \in L^2_0(\T) \cap H^2(\T)$ such that
\begin{equation}
\label{eq:b.3a}
\Delta \psi = \omega,
\end{equation}
and the map
\[L_0^2 \ni \omega \mapsto \psi \in L_0^2 \cap H^2 ,\]
is bounded. Let us put $u = \nabla^\perp \psi$, i.e.
\begin{equation}
\label{eq:b.4}
u = (D_2 \psi, - D_1 \psi).
\end{equation}
Then $u \in H^1(\T)$ and $\nabla \cdot u = 0$ in the weak sense. Thus $u \in \rm{V}$. Using all of this we define the bounded linear map $\Lambda : L^2_0(\T) \ni \omega \mapsto u \in  \rm{V}.$ Now we are left to check that \eqref{eq:b.2} and \eqref{eq:b.3} holds for this $\Lambda$.\\

Let us take $\omega \in L^2_0(\T)$ and put $u := \Lambda(\omega) \in \rm{V}$. Now considering LHS of \eqref{eq:b.2},
\begin{align*}
(\rm{Curl} \circ \Lambda) (\omega) & = \rm{Curl}(u) = D_2 u_1 - D_1 u_2 \\
& = D_2D_2 \psi - (-D_1 D_1 \psi) = \Delta \psi = \omega,
\end{align*}
where we have used the definitions of $\psi$ and $u$ from \eqref{eq:b.3a} and \eqref{eq:b.4}. Hence we have established \eqref{eq:b.2}.\\

Now we take $\v \in \rm{V}$ and put $\omega = \rm{Curl}(\v) \in L^2_0(\T)$. Define $\psi \in L^2_0(\T) \cap H^2(\T)$ by
\begin{equation}
\label{eq:b.5}
\Delta  \psi = \omega.
\end{equation}
Observe that
\[\Delta \varphi = \rm{Curl}(D_2 \varphi, -D_1 \varphi), \quad \varphi \in H^2(\T).\]
Thus by \eqref{eq:b.5} and the definition of $u$ from \eqref{eq:b.4} we obtain
\[\rm{Curl}(u) = \rm{Curl}(\v),\]
where $u = \nabla ^ \perp \psi \in \rm{V}$.\\
Therefore using Remark~\ref{remarkB.1} $u = \v$, thus proving that $\rm{Curl}$ is a linear isomorphism between $\rm{V}$ and $L^2_0(\T)$. It is straightforward to show \eqref{eq:ellreg1}. Thus we are left to prove \eqref{eq:ellreg2}.\\

Let us fix $p \in (1, \infty)$ and  take $u \in H^{1,p}(\T)$. Denote $\omega = \curl{u} \in L^p_0(\T)$. From the first part of the proof there exists a bounded linear map $\Lambda : L^p_0(\T) \to H^{1,p}(\T)$
\[\Lambda  : L^p_0 \ni \omega \mapsto u \in H^{1,p},\]
such that
\[\rm{Curl} \circ \Lambda = \mbox{id on }L^p_0(\T).\]
In particular, there exists a $C_p' > 0$,
\[|\Lambda \omega |_{H^{1,p}(\T)} \leq C_p' |\omega|_{L^p(\T)}, \quad \omega \in L^p_0(\T).\]
Hence
\begin{equation}
\label{eq:b.7}
|\nabla \Lambda \omega|_{L^p(\T)} \leq C_p' | \omega|_{L^p(\T)}, \quad \omega \in L^p_0(\T).
\end{equation}
Taking now $u \in H^{1,p}(\T)$. Putting $\omega = \curl{u}$ so that $\Lambda \omega = u$ from \eqref{eq:b.7} we infer \eqref{eq:b.8},
\begin{equation}
\label{eq:b.8}
|\nabla u|_{L^p(\T)} \leq C_p |\omega|_{L^p(\T)}.
\end{equation}
Now since $|\omega|_{L^p(\T)} \leq |\omega|_{L^\infty(\T)}$ for every $p$, we can establish \eqref{eq:ellreg2}.
\end{proof}

\bigskip
\bibliographystyle{plain}

\end{document}